\newtheorem{thm}{Theorem}
\newtheorem{lem}{Lemma}
\newtheorem{pro}{Proposition}
\newtheorem{cor}{Corollary}
\theoremstyle{remark}
\numberwithin{equation}{section}
\newcommand{\cal}{\mathcal}
\title{On orthogonal decompositions of hermitian Higgs bundles}
\author[Cardona]{Sergio A. H. Cardona}
\address{Secihti Research Fellow--Instituto de Matem\'aticas, Universidad Nacional \indent Aut\'onoma de 
M\'exico, Le\'on 2 altos, Col. centro, 68000, Oaxaca, Mexico}
\email{sholguin@im.unam.mx} 
\author[Mart\'inez-Ruiz]{Kenett Mart\'inez-Ruiz}
\address{Instituto de Matem\'aticas, Universidad Nacional Aut\'onoma de 
M\'exico, Le\'on \indent 2 altos, Col. centro, 68000, Oaxaca, Mexico}
\email{kenettmartinez@im.unam.mx} 
\subjclass[2010]{Primary 53C07, 53C55, 32C15; Secondary 14J60, 32G13}
\begin{document}

\maketitle 

\begin{abstract}
A hermitian Higgs bundle is a triple $({\mathfrak E},h) = (E,\Phi, h)$, where ${\mathfrak E}=(E,\Phi)$ is a Higgs bundle and $(E,h)$ is a holomorphic hermitian vector bundle. It is well-known that several results on holomorphic vector bundles extend to the Higgs bundles setting, although this is not always the case. In this article we show that some classical propositions, involving orthogonal decompositions of holomorphic hermitian vector bundles and the second fundamental form of its holomorphic subbundles, can be extended to hermitian Higgs bundles. The extended propositions concerning orthogonal decompositions have immediate applications in Higgs bundles, and we mention some of these throughout the article. Moreover, the extended propositions concerning the second fundamental form are generalizations of previously known results on Higgs bundles. In particular, here we include alternative proofs of these extended propositions without using local computations. Finally, as an application of the above results and due to the lack of a certain parallelism condition, we show that a classical theorem concerning the Kobayashi functional for holomorphic vector bundles does not admit a straightforward extension to Higgs bundles.\\

\noindent{\it Keywords}: Higgs bundle; K\"ahler manifold; hermitian metric. 

\end{abstract}
\section{Introduction} \label{Pre-Intro.}

Roughly speaking a Higgs bundle is a holomorphic vector bundle, together with a section of a certain associated holomorphic vector bundle. Since its introduction by Hitchin \cite{Hitchin} and Simpson \cite{Simpson, Simpson 2}, these geometric objects have played an important role in complex geometry as well as in mathematical physics. For instance, from the viewpoint of geometry, several results on holomorphic bundles concerning special metrics and connections can be extended to Higgs bundles \cite{Bruzzo-Granha, Bruzzo-Granha 2, Cardona 1, Cardona 2, Cardona 7, Li, Seaman}. Similarly, from the viewpoint of physics, Higgs bundles are closely related to dimensional reduction. In fact, in the pioneering work \cite{Hitchin}, Higgs bundles arise as geometric objects associated to Hitchin's equations (a two dimensional reduction of the self-dual Yang-Mills equations in four dimensions). On the other hand, Hitchin's equations can also be seen as the lowest dimensional case of another set of equations introduced by Ward \cite{Ward 2} and which are a dimensional reduction of Yang-Mills equations in higher dimensions. The equations introduced by Ward are known as $2k$-Hitchin's equations and can be directly related to Higgs bundles \cite{Cardona 9}. It is important to mention that from the very beginning, Higgs bundles are associated to another set of equations, usually known as the Hermitian-Yang-Mills equations \cite{Simpson}. Higgs bundles are to the Hermitian-Yang-Mills equations as holomorphic vector bundles are to the Hermitian-Einstein equations \cite{Kobayashi, Uhlenbeck-Yau}. Today, Higgs bundles and all the above equations are of major interest in geometry and physics and there exists a very ample literature related to the topic, see for instance \cite{Biswas-Schumacher, Kneipp, Ward, Wijnholt}. In particular, Higgs bundles are geometric objects of relevance in the foundational work of Kapustin and Witten \cite{Witten-Kapustin}. 

This article is organized as follows. In Section \ref{Intro.} we review part of the elementary theory on holomorphic bundles that will be important later on. In particular, in this section we explicitly recall and compile some classical results on holomorphic hermitian vector bundles contained within \cite{Kobayashi}, which are otherwise distributed throughout different chapters. In this section we also establish part of the notation and terminology that will be used throughout the article. In Section \ref{Higgs 2nd. fund.} we summarize some basic concepts and properties on Higgs bundles. We also mention two lemmas concerning short exact sequences and invariance conditions on Higgs bundles that will be relevant in the remaining part of the article. In this section we also revisit some propositions on Higgs bundles \cite{Bruzzo-Granha}, which are extensions of well-known results associated to the second fundamental form of holomorphic subbundles of holomorphic hermitian vector bundles \cite{Kobayashi}. We include here alternative proofs of these results that do not use local computations. In Section \ref{Herm. Higgs b.} we establish a couple of propositions concerning orthogonal decompositions of hermitian Higgs bundles. In particular, we include proofs of these results that rely heavily on classical results on holomorphic hermitian vector bundles as well as standard definitions on Higgs bundles. We then show immediate applications of these established results. More specifically, we show that by utilizing the aforementioned lemmas and by introducing the notion of $C^{\infty}$ Higgs orthogonal complement, the extended results on Higgs bundles involving orthogonal decompositions imply two corollaries, which can be established by referring exclusively to objects in the Higgs category. Finally, in Section \ref{Sec. cal J} we briefly review some properties of the Kobayashi functional which are included in \cite{Cardona 7} and we show that,  due to the lack of a parallelism condition of the Hitchin-Simpson connection, a classical theorem concerning the Kobayashi functional for holomorphic vector bundles does not admit a straightforward extension to Higgs bundles. 

The main purpose of this article is to establish a couple of propositions as well as to revisit two other results on hermitian Higgs bundles \cite{Bruzzo-Granha}. The first couple of propositions are natural extensions of well-known results on orthogonal decompositions of holomorphic hermitian vector bundles. These classical results are included in the form of propositions 4.18 and 6.14 in the first chapter of the celebrated textbook of Kobayashi \cite{Kobayashi}. There are two other classical results, closely related to the previous propositions and which appear in the same reference as propositions 6.4 and 6.6. Extensions of these latter results to hermitian Higgs bundles have been previously studied in \cite{Bruzzo-Granha}. However, as we will see, they are are only special cases of more general propositions. For the reader's convenience, in the following section we review some elementary definitions, as well as summarize the classical results on holomorphic vector bundles that are strictly necessary for understanding the article (for more details the reader can see \cite{Demailly, Griffiths-Harris, Kobayashi}).
\section{Preliminaries}\label{Intro.}

Throughout this article, $M$ is an $n$-dimensional compact K\"ahler manifold with K\"ahler form $\omega$ and $E$ is a rank $r$ holomorphic vector bundle over it. We denote by $\Omega^{1,0}$ and $\Omega^{0,1}$ the holomorphic cotangent bundle of $M$ and its complex conjugate bundle and by $\Omega^{p,q}$, with $0\le p,q \le n$, the $C^{\infty}$ complex vector bundle on $M$ obtained by taking wedge products ($p$ and $q$ times) of $\Omega^{1,0}$ and $\Omega^{0,1}$. If $1\le s \le 2n$ we have $\Omega^{s}= \oplus_{p+q = s} \Omega^{p,q}$ and we denote by $A^{s}$ and $A^{p,q}$ (resp. $A^{s}(E)$ and $A^{p,q}(E)$) the spaces of $C^{\infty}$ complex $s$-forms and $(p,q)$-forms on $M$ (resp. forms on $M$ with coefficients in $E$), i.e., they are the spaces of $C^{\infty}$ sections of $\Omega^{s}$ and $\Omega^{p,q}$ (resp. of   $\Omega^{s}\otimes E$ and $\Omega^{p,q}\otimes E$). As it is known, any hermitian metric $h$ in $E$ determines a unique connection $D_{h}$ in $E$, usually known as the {\it Chern} or the {\it hermitian connection} of $h$. Following the notation in \cite{Kobayashi} we decompose $D_{h} = D'_{h} + d''$ into its $(1,0)$ and $(0,1)$ parts. The curvature of this connection is given by $R_{h} = D_{h}\wedge D_{h}$ and is an ${\rm End}E$-valued form of type $(1,1)$, usually called the {\it Chern} or the {\it hermitian curvature} of $h$. Hence, $R_{h}$ is an element in $A^{1,1}({\rm End}E)$. Using the above decomposition in types of $D_{h}$ we have 
\begin{equation*}
R_{h} = D'_{h}\wedge d'' + d'' \wedge D'_{h}\,. 
\end{equation*}

Associated to the curvature $R_{h}$, one has the {\it Chern mean curvature} $K_{h}$, which is the element in $A^{0}({\rm End}E)$ defined by\footnote{Some sources \cite{Bruzzo-Granha,Seaman} equivalently define the mean curvature operator as $K_{h} = i\Lambda_\omega R_{h}$, where $\Lambda_{\omega}$ is the dual of the  \emph{Lefschetz operator} $L=\omega\wedge\bullet$ (see \cite[p.~99]{Kobayashi} for an explanation of this equivalence).} 
\begin{equation}
in R_{h} \wedge \omega^{n-1} = K_{h}\,\omega^{n} .  \label{Def. K}
\end{equation}

A hermitian metric $h$ is said to be {\it Hermitian-Yang-Mills} if $K_{h} = cI$, where $I$ denotes the identity endomorphism of $E$ and $c$ is a constant, which is indeed fixed by invariants of the bundle (see \cite{Kobayashi}, p. 178). If $h$ is Hermitian-Yang-Mills, the pair $(E,h)$ is called an {\it Hermitian-Yang-Mills vector bundle}.\footnote{Sometimes the terminology {\it Hermitian-Einstein metric} is also used, the latest one was indeed introduced by Kobayashi \cite{Kobayashi} as a generalization to holomorphic vector bundles of the notion of {\it K\"ahler-Einstein metric} for complex K\"ahler manifolds.}

Given an $h$ in $E$, the pair $(E,h)$ is called a {\it holomorphic hermitian vector bundle} and it is common to say that $D_{h}$ and $R_{h}$ are the Chern connection and curvature of $(E,h)$, respectively. Clearly, we can consider $C^{\infty}$ complex subbundles of $E$ and not only holomorphic subbundles. In particular, if $E'\subset E$ is a $C^{\infty}$ complex subbundle we define the orthogonal complement $E''$ of $E'$ with respect to $h$, which is also a $C^{\infty}$ subbundle of $E$. The bundles $E'$ and $E''$ are not necessarily holomorphic and hence they give a $C^{\infty}$ orthogonal decomposition $E = E' \oplus E''$. However, there exists a proposition in complex geometry that guarantees that the above decomposition is also holomorphic if a certain invariant condition holds. To be precise one has the following result.

\begin{pro} {\rm (\cite{Kobayashi}, Prop. 4.18, p. 13).} \label{Prop. 1, classical}
Let $(E,h)$ be a holomorphic hermitian vector bundle and $D_{h}$ its Chern connection. Let $E'\subset E$ be a $D_{h}$-invariant $C^{\infty}$ complex subbundle and $E''$ the orthogonal complement of $E'$ with respect to $h$. Then both $E'$ and $E''$ are $D_h$-invariant holomorphic subbundles of $E$ and they give a holomorphic orthogonal decomposition:
\begin{equation}
E= E' \oplus E''.  \label{ort. dec. classical} 
\end{equation}   
\end{pro}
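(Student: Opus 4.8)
The plan is to reduce the statement to two standard facts about the Chern connection of a holomorphic hermitian bundle. The first is the characterization of holomorphic subbundles through the Dolbeault operator: a $C^{\infty}$ complex subbundle $F\subset E$ is a holomorphic subbundle precisely when it is invariant under the $(0,1)$-part $d''$ of $D_{h}$, that is, when $d''(A^{0}(F))\subset A^{0,1}(F)$ (here $d''=\bar\partial_{E}$, since $E$ is holomorphic). The second is the compatibility of $D_{h}$ with $h$, namely $d\,h(s,t)=h(D_{h}s,t)+h(s,D_{h}t)$ for all sections $s,t$. With these in hand, the argument splits into treating $E'$ and then $E''$.

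First I would observe that $D_{h}$-invariance of $E'$ already forces holomorphicity of $E'$. Writing $D_{h}=D'_{h}+d''$ and decomposing $D_{h}s'$ into types for $s'\in A^{0}(E')$, the hypothesis $D_{h}(A^{0}(E'))\subset A^{1}(E')$ splits as $D'_{h}(A^{0}(E'))\subset A^{1,0}(E')$ together with $d''(A^{0}(E'))\subset A^{0,1}(E')$. The second inclusion is exactly the $d''$-invariance criterion, so $E'$ is a holomorphic subbundle. Next I would show that $E''$ is itself $D_{h}$-invariant, after which the same type-decomposition argument makes $E''$ holomorphic as well. To see this, take $s''\in A^{0}(E'')$ and decompose $D_{h}s''=\omega'+\omega''$ with $\omega'\in A^{1}(E')$ and $\omega''\in A^{1}(E'')$; it suffices to prove $\omega'=0$. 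Testing against an arbitrary $s'\in A^{0}(E')$ and using $h(s'',s')=0$ with metric compatibility gives $0=h(D_{h}s'',s')+h(s'',D_{h}s')$. By $D_{h}$-invariance of $E'$ the $E'$-valued form $D_{h}s'$ pairs trivially with $s''\in E''$, so the second term vanishes; hence $h(D_{h}s'',s')=h(\omega',s')=0$ for every $s'\in A^{0}(E')$, and nondegeneracy of $h$ on $E'$ yields $\omega'=0$. Thus $D_{h}s''\in A^{1}(E'')$, so $E''$ is $D_{h}$-invariant and therefore holomorphic. Combining both steps, $E=E'\oplus E''$ is an orthogonal decomposition into $D_{h}$-invariant holomorphic subbundles, which is the assertion \eqref{ort. dec. classical}.

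The step requiring the most care is the passage from $D_{h}$-invariance to holomorphicity: one must correctly identify $d''$ with the ambient Dolbeault operator $\bar\partial_{E}$ and invoke the substantive half of the characterization above, namely that $d''$-invariance of a \emph{smooth} subbundle actually endows it with a holomorphic structure for which the inclusion is holomorphic (this rests on the integrability of the inherited operator, via Koszul--Malgrange, or equivalently on the local existence of $\bar\partial_{E}$-closed frames of $E'$). By contrast, the orthogonality computation for $E''$ is routine once one keeps track of the fact that $D_{h}s'$ is a $1$-form valued in $E'$ and so pairs to zero against sections of $E''$.
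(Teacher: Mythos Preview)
Your argument is correct and is essentially the standard proof of this classical fact. Note, however, that the paper does not supply its own proof of this proposition: it is quoted verbatim from Kobayashi \cite{Kobayashi} (Prop.~(4.18), p.~13) as background, so there is no ``paper's proof'' to compare against beyond the reference itself. What you wrote matches Kobayashi's approach: split $D_{h}=D'_{h}+d''$ to extract $d''$-invariance of $E'$ (hence holomorphicity), then use metric compatibility to transfer $D_{h}$-invariance to the orthogonal complement $E''$.
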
   

There exists an important application of Proposition \ref{Prop. 1, classical} involving the holonomy of the Chern connection of $(E,h)$ and which we briefly summarize here (for more details see \cite{Kobayashi}, p. 107). Let $\Psi(x)$ be the holonomy group of $D_{h}$ with reference point $x\in M$ and let
\begin{equation}
E_{x} = E_{x}^{0} \oplus E_{x}^{1} \oplus \cdots \oplus E_{x}^{k}   \label{ort.dec. classical, fibre}
\end{equation}
be the orthogonal decomposition of the fibre $E_{x}$ such that $\Psi(x)$ acts trivially on $E_{x}^{0}$ and irreducibly on each $E_{x}^{1},..., E_{x}^{k}$. Of course $E_{x}^{0}$ may be the trivial vector space. Since $D_{h}$ is the Chern connection, $h$ is parallel with respect to it. Therefore, by taking a parallel displacement of \eqref{ort.dec. classical, fibre} we obtain an orthogonal decomposition
\begin{equation}
E = E^{0} \oplus E^{1} \oplus \cdots \oplus E^{k}    \label{ort.dec. classical}
\end{equation}
where $E^{0}$ is trivial as a hermitian vector bundle. Since $E^{0},..., E^{k}$ are all $D_{h}$-invariant $C^{\infty}$ complex subbundles of $E$, Proposition  \ref{Prop. 1, classical} implies that \eqref{ort.dec. classical}  is also holomorphic, i.e., it is a holomorphic orthogonal decomposition of $E$.

As it is well-known, if $S\subset E$ is a rank $p$ holomorphic subbundle we have the following short exact sequence 
\begin{equation}
0 \longrightarrow S \longrightarrow E \longrightarrow Q \longrightarrow 0   \label{classical sec.}
\end{equation}
where $Q=E/S$ is a rank $r-p$ holomorphic vector bundle. Let $h$ be a hermitian metric in $E$ and $S^{\bot}$ the orthogonal complement of $S$ with respect to $h$, then $S^{\bot}$ is a $C^{\infty}$ complex vector bundle (not necessarily a holomorphic one) and hence $E = S \oplus S^{\bot}$ is --a priori-- a $C^{\infty}$ complex orthogonal decomposition. As $C^{\infty}$ complex vector bundles $Q \cong S^{\bot}$ and the restriction of $h$ to $S^{\bot}$ induces --via this isomorphism-- a hermitian metric $h_{Q}$ in $Q$. Therefore, if $h_S$ denotes the restriction of $h$ to $S$, the pairs $(S,h_{S})$ and $(Q,h_{Q})$ are both holomorphic hermitian vector bundles.
 
Given any $\xi\in A^{0}(S)$ we have a decomposition 
 \begin{equation}
 D_{h}\xi = D_{h,S}\xi + A_{h}\xi\,,  \label{Def. A}
 \end{equation}
 with $D_{h,S}\xi\in A^{1}(S)$ and $A_{h}\xi\in A^{1}(S^{\bot})$, and one has the following result.
 
 \begin{pro}{\rm (\cite{Kobayashi}, Prop. 6.4, p. 20).} \label{Prop. 2, classical}
Let $(E,h)$ be a holomorphic hermitian vector bundle and $D_{h}$ its Chern connection. Let $S\subset E$ be a holomorphic subbundle. Then, in the decomposition \eqref{Def. A}, the operator $D_{h,S}$ is the Chern connection of $(S,h_{S})$ and $A_{h}$ is a $(1,0)$-form with values in ${\rm Hom}(S,S^{\bot})$, i.e., $D_{h,S} = D_{h_{S}}$ and $A_{h}\in A^{1,0}({\rm Hom}(S,S^{\bot}))$.
 \end{pro}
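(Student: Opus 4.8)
The plan is to verify that the $S$-component $D_{h,S}$ of $D_{h}$ satisfies the two defining properties of the Chern connection of $(S,h_{S})$ and then to invoke its uniqueness, while the very same computation will reveal the type of $A_{h}$. Throughout I write $\pi_{S}\colon E\to S$ and $\pi_{\bot}\colon E\to S^{\bot}$ for the $C^{\infty}$ orthogonal projections determined by $h$, so that for $\xi\in A^{0}(S)$ one has $D_{h,S}\xi=\pi_{S}(D_{h}\xi)$ and $A_{h}\xi=\pi_{\bot}(D_{h}\xi)$. The key structural remark is that $\pi_{S}$ and $\pi_{\bot}$ are pointwise-linear bundle maps, so they commute with the splitting of a form into its $(1,0)$- and $(0,1)$-parts.

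First I would check the elementary algebraic properties. For a smooth function $f$ and $\xi\in A^{0}(S)$ the Leibniz rule gives $D_{h}(f\xi)=df\otimes\xi+fD_{h}\xi$; applying $\pi_{\bot}$ and using $\pi_{\bot}(\xi)=0$ yields $A_{h}(f\xi)=fA_{h}\xi$, so $A_{h}$ is tensorial and hence $A_{h}\in A^{1}({\rm Hom}(S,S^{\bot}))$, whereas applying $\pi_{S}$ and using $\pi_{S}(\xi)=\xi$ shows that $D_{h,S}$ obeys the Leibniz rule and is therefore a genuine connection on $S$.

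Next I would establish the two properties that single out $D_{h_{S}}$. For metric compatibility, take $\xi,\eta\in A^{0}(S)$ and start from $d\,h(\xi,\eta)=h(D_{h}\xi,\eta)+h(\xi,D_{h}\eta)$; since $A_{h}\xi$ and $A_{h}\eta$ take values in $S^{\bot}$ while $\eta,\xi$ lie in $S$, the terms $h(A_{h}\xi,\eta)$ and $h(\xi,A_{h}\eta)$ vanish, leaving $d\,h_{S}(\xi,\eta)=h_{S}(D_{h,S}\xi,\eta)+h_{S}(\xi,D_{h,S}\eta)$, so $D_{h,S}$ is metric for $h_{S}$. For compatibility with the holomorphic structure I would decompose $D_{h}\xi$ into types. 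Because $S$ is a holomorphic subbundle, the operator $d''$ maps $A^{0}(S)$ into $A^{0,1}(S)$, so the $(0,1)$-part of $D_{h}\xi$ already lies in $S$. Applying $\pi_{\bot}$, which preserves types, annihilates this $(0,1)$-part, whence $A_{h}\xi$ has no $(0,1)$-component; this is precisely the assertion $A_{h}\in A^{1,0}({\rm Hom}(S,S^{\bot}))$. Applying $\pi_{S}$ instead leaves the $(0,1)$-part unchanged, giving $(D_{h,S})^{0,1}\xi=d''\xi$, i.e. the $(0,1)$-part of $D_{h,S}$ is the Dolbeault operator $d''_{S}$ of $S$.

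Having shown that $D_{h,S}$ is a metric connection for $h_{S}$ whose $(0,1)$-part coincides with $d''_{S}$, the uniqueness of the Chern connection of $(S,h_{S})$ forces $D_{h,S}=D_{h_{S}}$, which together with the type statement for $A_{h}$ completes the argument. The conceptual core --- and the only step that is not purely formal --- is the type computation: it is the holomorphicity of $S$, ensuring that $d''$ preserves $A^{0}(S)$, that simultaneously pins $A_{h}$ down as a $(1,0)$-form and identifies the $(0,1)$-part of $D_{h,S}$ with $d''_{S}$. Beyond carefully tracking that the algebraic projections commute with the bidegree splitting, I anticipate no real obstacle.
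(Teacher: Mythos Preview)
Your argument is correct and is essentially the standard proof: verify that $D_{h,S}$ is a connection, check metric compatibility via orthogonality, use holomorphicity of $S$ to see that the $(0,1)$-part of $D_{h}\xi$ already lies in $A^{0,1}(S)$ (whence $A_{h}$ is of type $(1,0)$ and $(D_{h,S})^{0,1}=d''_{S}$), and conclude by uniqueness of the Chern connection. Note, however, that the paper does not supply its own proof of this proposition; it is quoted as a classical result from Kobayashi's book and used as input for the Higgs extensions in Section~\ref{Higgs 2nd. fund.}. Your proof is precisely the one Kobayashi gives, so there is nothing to compare.
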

 In a similar form, for any $\eta\in A^{0}(S^{\bot})$ we have 
 \begin{equation}
 D_{h}\eta = B_{h}\eta + D_{h,S^{\bot}}\eta\,, \label{Def. B}
 \end{equation}
 with $D_{h,S^{\bot}}\eta\in A^{1}(S^{\bot})$ and $B_{h}\eta\in A^{1}(S)$. Via the aforementioned isomorphism we can consider 
 \begin{equation}
 D_{h,S^{\bot}} \equiv D_{h,Q} : A^{0}(Q) \longrightarrow A^{1}(Q)   \label{Def. D_h,Q}
 \end{equation}
 and one has the following result.
 \begin{pro}{\rm (\cite{Kobayashi}, Prop. 6.6, p. 21).} \label{Prop. 3, classical}
 Let $(E,h)$, $D_{h}$ and $S$ as in Proposition \ref{Prop. 2, classical} and $Q$ the holomorphic bundle given by \eqref{classical sec.}. Then the operator $D_{h,Q}$ defined in \eqref{Def. D_h,Q} is the Chern connection of $(Q,h_{Q})$ and $B_{h}$ is a $(0,1)$-form with values in ${\rm Hom}(S^{\bot},S)$, i.e., $D_{h,Q} = D_{h_{Q}}$ and $B_{h}\in A^{0,1}({\rm Hom}(S^{\bot},S))$. Moreover, for any $\xi\in A^{0}(S)$ and $\eta\in A^{0}(S^{\bot})$ we have
 \begin{equation}
  h(A_{h}\xi,\eta) + h(\xi,B_{h}\eta) = 0\,.   \label{Adj. formula}
 \end{equation}
 \end{pro}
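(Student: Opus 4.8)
The plan is to establish the three assertions in the order: the adjoint formula \eqref{Adj. formula}, the type of $B_{h}$, and finally the identification $D_{h,Q}=D_{h_{Q}}$. First I would prove \eqref{Adj. formula}. Since $S$ and $S^{\bot}$ are $h$-orthogonal, $h(\xi,\eta)=0$ for every $\xi\in A^{0}(S)$ and $\eta\in A^{0}(S^{\bot})$; differentiating this identity and using that $D_{h}$ is a metric connection gives $h(D_{h}\xi,\eta)+h(\xi,D_{h}\eta)=0$. Inserting the decompositions \eqref{Def. A} and \eqref{Def. B} and discarding the two terms $h(D_{h,S}\xi,\eta)$ and $h(\xi,D_{h,S^{\bot}}\eta)$, which pair a section of $S$ against one of $S^{\bot}$ and therefore vanish, leaves exactly \eqref{Adj. formula}. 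This step is routine; the only point to record is that $h$ extends to form-valued sections by pairing the bundle parts and wedging (after conjugation) the form parts, so the orthogonality of the bundle components persists.

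Next I would read off the type of $B_{h}$ from \eqref{Adj. formula} together with Proposition \ref{Prop. 2, classical}. By that proposition $A_{h}\in A^{1,0}(\mathrm{Hom}(S,S^{\bot}))$, so $h(A_{h}\xi,\eta)$ is a $(1,0)$-form. On the other side, because $h$ is conjugate-linear in its second argument, its extension to forms conjugates the form part of $B_{h}\eta$: the $(1,0)$-component of $B_{h}\eta$ contributes a $(0,1)$-form to $h(\xi,B_{h}\eta)$, while the $(0,1)$-component contributes a $(1,0)$-form. Matching types in \eqref{Adj. formula} forces the $(0,1)$-part of $h(\xi,B_{h}\eta)$ to vanish, i.e. $h(\xi,(B_{h}\eta)^{1,0})=0$ for all $\xi\in A^{0}(S)$; since $(B_{h}\eta)^{1,0}$ is a section of $S$ and $h$ is nondegenerate on $S$, this gives $(B_{h}\eta)^{1,0}=0$. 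Hence $B_{h}\in A^{0,1}(\mathrm{Hom}(S^{\bot},S))$, which is the type claim.

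Finally I would identify $D_{h,Q}$ with the Chern connection of $(Q,h_{Q})$ by verifying its two defining properties. For metric compatibility, differentiate $h(\eta,\eta')$ for $\eta,\eta'\in A^{0}(S^{\bot})$; the terms $h(B_{h}\eta,\eta')$ and $h(\eta,B_{h}\eta')$ vanish by orthogonality, leaving $d\,h(\eta,\eta')=h(D_{h,S^{\bot}}\eta,\eta')+h(\eta,D_{h,S^{\bot}}\eta')$, which under the isometry $j=\pi|_{S^{\bot}}\colon S^{\bot}\to Q$ says that $D_{h,Q}$ is compatible with $h_{Q}$. For the holomorphic part, I would compare $(0,1)$-components in \eqref{Def. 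B}: since $B_{h}\eta$ has type $(0,1)$, it is precisely the $S$-component of $d''\eta$, and consequently the $(0,1)$-part of $D_{h,S^{\bot}}\eta$ is the $S^{\bot}$-component of $d''\eta$. Transporting along $j$ and using that the quotient operator is characterized by $d''_{Q}(\pi\zeta)=\pi(d''\zeta)$ (well defined because $d''$ preserves the holomorphic subbundle $S$), one obtains $d''_{Q}(j\eta)=j\big((d''\eta)_{S^{\bot}}\big)$, so the $(0,1)$-part of $D_{h,Q}$ equals $d''_{Q}$. By uniqueness of the Chern connection, these two properties yield $D_{h,Q}=D_{h_{Q}}$. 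The main obstacle is this last identification: one must check that the purely $C^{\infty}$ isomorphism $S^{\bot}\cong Q$ intertwines the induced $\bar\partial$-operator on $S^{\bot}$, namely the $S^{\bot}$-projection of $d''$, with the genuine holomorphic structure $d''_{Q}$ of the quotient, and it is exactly here that the holomorphicity of $S$ is used.
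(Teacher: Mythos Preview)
Your argument is correct and is essentially the standard proof one finds in Kobayashi: derive the adjoint identity \eqref{Adj. formula} by differentiating $h(\xi,\eta)=0$ with the metric connection, read off the $(0,1)$-type of $B_h$ from the $(1,0)$-type of $A_h$ via that identity, and then check the two defining properties of the Chern connection on $Q$ through the $C^\infty$ isomorphism $S^\bot\cong Q$. Note, however, that the present paper does not supply its own proof of this statement; Proposition~\ref{Prop. 3, classical} is quoted from \cite{Kobayashi} as a classical input and used without argument, so there is no in-paper proof to compare your proposal against.
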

 
The form $A_{h}$ in Proposition \ref{Prop. 2, classical} is called the {\it second fundamental form} of $S$ in $(E,h)$ and again --via the isomorphism-- one has $A_{h}\in A^{1,0}({\rm Hom}(S,Q))$ and $B_{h}\in A^{0,1}({\rm Hom}(Q,S))$. The expression \eqref{Adj. formula} says that $A_{h}$ is the adjoint of $-B_{h}$ with respect to $h$. Now, as a consequence of the Leibniz rule, \eqref{Def. A} and \eqref{Def. B} can be naturally extended to decompositions for $s$-forms evaluated on $S$ and $S^{\bot}$, respectively.\footnote{Notice that if $\lambda\in A^{s}$, then $D_{h}(\lambda\xi) = (d\lambda)\xi + D_{h}\xi\wedge\lambda = D_{h,S}(\lambda\xi) + A_{h}\wedge(\lambda\xi)\,$.} Therefore, for any $\xi\in A^{0}(S)$ we have
\begin{eqnarray*}
R_{h}\xi &=& D_{h}(D_{h,S}\xi + A_{h}\xi)\\
             &=& D_{h,S}(D_{h,S}\xi) + A_{h}\wedge(D_{h,S}\xi) + B_{h}\wedge(A_{h}\xi) + D_{h,S^{\bot}}(A_{h}\xi)\\
             &=& (R_{h,S} + B_{h}\wedge A_{h} + D_{h}A_{h})\xi\,. 
\end{eqnarray*}
Similarly, for any $\eta\in A^{0}(S^{\bot})$ we get
\begin{equation*}
R_{h}\eta = (R_{h,S^{\bot}} + A_{h}\wedge B_{h} + D_{h}B_{h})\eta\,. 
\end{equation*}
Putting all these together we can rewrite $R_{h}$ in a matrix form as follows:
\begin{eqnarray}\label{R_h Koba}
R_h =
\begin{pmatrix}
R_{h,S}+B_h\wedge A_h & D_hB_h \\
D_hA_h & R_{h,S^\bot}+A_h\wedge B_h
\end{pmatrix},
\end{eqnarray}
which are Gauss-Codazzi type equations for holomorphic hermitian vector bundles (cf. \cite{Kobayashi}, p. 23 or also \cite{Demailly}, p. 274). Now, from Propositions \ref{Prop. 2, classical} and \ref{Prop. 3, classical} the forms $A_{h}$ and $B_{h}$ are of type $(1,0)$ and $(0,1)$ and since $R_{h}\in A^{1,1}({\rm End}E)$, the last terms in the right hand sides of $R_{h}\xi$ and $R_{h}\eta$ can be replaced by $d''A_{h}$ and $D'_{h}B_{h}$. Consequently, the matrix expression \eqref{R_h Koba} can be further simplified. Finally, one has the following result.

\begin{pro} {\rm (\cite{Kobayashi}, Prop. 6.14, p. 23).} \label{Prop. 4, classical}
 Let $(E,h)$, $D_{h}$ and $S$ as in Proposition \ref{Prop. 2, classical}. If the second fundamental form $A_{h}$ of $S$ in $(E,h)$ vanishes identically, then the orthogonal complement $S^{\bot}\subset E$ is a holomorphic subbundle and 
 \begin{equation}
 E = S\oplus S^{\bot} \label{Dec. S, S^bot, classical}
 \end{equation}
 is a holomorphic orthogonal decomposition.   
\end{pro}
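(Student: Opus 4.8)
The plan is to deduce the holomorphicity of $S^{\bot}$ from the vanishing of the complementary connection term $B_{h}$, and then to read off the orthogonal decomposition. The starting observation is that the hypothesis $A_{h}\equiv 0$ immediately forces $B_{h}\equiv 0$ as well. Indeed, setting $A_{h}=0$ in the adjoint relation \eqref{Adj. formula} of Proposition \ref{Prop. 3, classical} gives $h(\xi,B_{h}\eta)=0$ for every $\xi\in A^{0}(S)$ and every $\eta\in A^{0}(S^{\bot})$; since $B_{h}\eta$ takes its values in $S$ and $h$ is positive definite on $S$, nondegeneracy yields $B_{h}\eta=0$ for all $\eta$, that is, $B_{h}\equiv 0$.

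Next I would feed this into the decomposition \eqref{Def. B}. With $B_{h}\equiv 0$ it collapses to $D_{h}\eta = D_{h,S^{\bot}}\eta\in A^{1}(S^{\bot})$ for every $\eta\in A^{0}(S^{\bot})$, so that $S^{\bot}$ is a $D_{h}$-invariant $C^{\infty}$ subbundle of $E$. The key step is then to invoke the standard criterion identifying holomorphic subbundles: a $C^{\infty}$ complex subbundle is holomorphic precisely when it is invariant under the $(0,1)$-part $d''$ of the Chern connection. Taking the $(0,1)$-part of the preceding identity shows that $d''\eta$ has no component along $S$, so $S^{\bot}$ is $d''$-invariant and hence a holomorphic subbundle.

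Alternatively, and perhaps more economically, the whole statement follows at once from Proposition \ref{Prop. 1, classical}: reading \eqref{Def. A} with $A_{h}\equiv 0$ gives $D_{h}\xi = D_{h,S}\xi\in A^{1}(S)$ for every $\xi\in A^{0}(S)$, so $S$ is itself a $D_{h}$-invariant $C^{\infty}$ subbundle; applying Proposition \ref{Prop. 1, classical} with $E'=S$ then delivers directly that both $S$ and $S^{\bot}$ are $D_{h}$-invariant holomorphic subbundles and that $E=S\oplus S^{\bot}$ is a holomorphic orthogonal decomposition. In either route the conclusion is identical: $S$ is holomorphic by hypothesis, $S^{\bot}$ is holomorphic by the argument above, the two are $h$-orthogonal by construction, and therefore \eqref{Dec. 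S, S^bot, classical} is a holomorphic orthogonal decomposition.

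The point requiring the most care is the holomorphicity criterion itself. One must keep in mind that the mere fact that $A_{h}$ is of type $(1,0)$ already makes $S$ automatically $d''$-invariant (as it must be, $S$ being holomorphic), so that type considerations alone say nothing about the complement; it is genuinely the full vanishing $A_{h}\equiv 0$, equivalently $B_{h}\equiv 0$, that removes the $S$-valued $(0,1)$-component of $d''$ on sections of $S^{\bot}$ and thereby upgrades $S^{\bot}$ from a $C^{\infty}$ to a holomorphic subbundle. This passage through $B_{h}$ and the $d''$-invariance characterization is the substantive content of the argument; everything else is formal.
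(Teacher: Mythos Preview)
Your proposal is correct, and the ``alternative'' route you sketch---reading \eqref{Def. A} with $A_{h}\equiv 0$ to get $D_{h}$-invariance of $S$ and then invoking Proposition \ref{Prop. 1, classical} with $E'=S$---is precisely the argument the paper gives. Your first route through $B_{h}$ and the $d''$-invariance criterion is also valid but slightly more roundabout; the paper's (and your second) approach bypasses it entirely.
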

Proposition \ref{Prop. 4, classical} is a straightforward consequence of Proposition \ref{Prop. 1, classical}. In fact, if $A_{h}$ vanishes identically, then \eqref{Def. A} implies that $S\subset E$ is $D_{h}$-invariant. Notice that the converse of Proposition \ref{Prop. 4, classical} is also true. In fact, if the decomposition $E=S\oplus S^{\bot}$ is holomorphic, then the pairs $(S,h_{S})$ and $(S^{\bot},h_{S^{\bot}})$ are both holomorphic hermitian vector bundles. If $D_{h_{S}}$ and $D_{h_{S^{\bot}}}$ are the Chern connections of these pairs, then $D_{h}$ is the direct sum of $D_{h_{S}}$ and $D_{h_{S^{\bot}}}$ and the result follows from \eqref{Def. A} and Proposition \ref{Prop. 2, classical}, or equivalently from \eqref{Def. B} and Proposition \ref{Prop. 3, classical}. 

On the other hand, let us denote by ${\rm Herm}^{+}E$ {\it the space of hermitian metrics} in the holomorphic vector bundle $E$. If $h \in {\rm Herm}^{+}E$ is considered as a variable and $K_{h}$ is the curvature defined in \eqref{Def. K}, Kobayashi introduces the functional $J:{\rm Herm}^{+}E \longrightarrow {\mathbb R}$ defined by 
\begin{equation}
J(h) = \frac{1}{2}\int_{M}\lvert K_{h}\lvert^{2}\omega^{n} = \frac{n!}{2}\lVert K_{h}\lVert ^{2}\,  \label{Def. J}
\end{equation} 
where $\lvert \cdot \lvert$ and $\lVert\cdot\lVert$ denote the usual pointwise and $L^{2}$-norms in $A^{0}({\rm End}E)$, respectively (see \cite{Kobayashi} or \cite{Cardona 7, Cardona 9} for details). Hence, up to a multiplicative constant, $J$ is the energy functional for the Chern mean curvature. As it is well-known, the functional \eqref{Def. J} can be considered as the Yang-Mills functional\footnote{Strictly speaking, the {\it Yang-Mills functional} $I$ is the energy functional of the Chern curvature. However, the difference $I(h) - J(h)$ does not depend on $h$ and is a topological term involving the first and second Chern classes of $E$. See \cite{Kobayashi}, p. 111 for details.} and there are some classical results associated to $J$. In particular, a hermitian metric $h$ is a minimum of $J$ if and only if it is Hermitian-Yang-Mills. More in general one has the following result. 

\begin{thm} {\rm (\cite{Kobayashi}, Thm. 3.21, p. 110).} \label{Prop. 5, classical}
Let $E$ be a holomorphic vector bundle. A hermitian metric $h$ is a critical point of the functional \eqref{Def. J} if and only if the Chern mean curvature is parallel with respect to the Chern connection defined by $h$, i.e., if and only if 
\begin{equation}
D_{h}K_{h} = 0\,.   \label{DK classical}  
\end{equation} 
\end{thm} 

The parallelism condition \eqref{DK classical}, together with the parallelism of $h$ with respect to the Chern connection, has very important consequences for the holomorphic bundle $E$ and functional $J$ (see \cite{Kobayashi}, p. 111 for details). In fact, assume that $h$ is a critical point of $J$. Then, using \eqref{DK classical} we can diagonalize $K_{h}$ and obtain a decomposition of $E$ in terms of the $C^{\infty}$ subbundles. Taking a further decomposition of it using the holonomy group of $D_{h}$, we obtain a holomorphic orthogonal decomposition as in \eqref{ort.dec. classical}, where this time $E^{0}$, $E^{1}, ... , E^{k}$ are holomorphic eigenbundles of $K_{h}$ with constants $0, c_{1}, ..., c_{k}$. In summary, from the above one has the following result.

\begin{thm} {\rm (\cite{Kobayashi}, Thm. 3.27, p. 111).} \label{Prop. 6, classical}
Let $E$ be a holomorphic vector bundle. Assume that $h$ is a critical point of the functional \eqref{Def. J} and let 
\begin{equation}
E = E^{0} \oplus E^{1} \oplus \cdots \oplus E^{k}\, \label{ort.dec. classical final}
\end{equation}
be the holomorphic orthogonal decomposition of $(E,h)$ given by \eqref{ort.dec. classical}. Let $h_{0}, h_{1}, ... , h_{k}$ be the restrictions of $h$ to $E^{0}, E^{1}, ... , E^{k}$, respectively. Then, the pairs $(E^{0},h_{0}), (E^{1},h_{1}), ... , (E^{k},h_{k})$ are all Hermitian-Yang-Mills vector bundles with constants $0, c_{1}, ... , c_{k}$.
\end{thm}
 
We would like to emphasize that propositions \ref{Prop. 2, classical} and \ref{Prop. 3, classical} have been already extended to Higgs bundles and appear --although using a local component notation-- in the form of Proposition 2.3 in \cite{Bruzzo-Granha}. However, as we will explain in the following section, these results are just special cases of two more general propositions on Higgs bundles. Also, Theorem \ref{Prop. 5, classical} has been extended to Higgs bundles in \cite{Cardona 7}. However, as far as the authors know, extensions of propositions \ref{Prop. 1, classical} and \ref{Prop. 4, classical}  and Theorem \ref{Prop. 6, classical} have not been explored yet in the literature. As previously mentioned, the main purpose of this article is to study in detail possible extensions of these results to Higgs bundles. We will address all these issues in the last two sections of the article.
 
\section{Hermitian Higgs bundles and the second fundamental form} \label{Higgs 2nd. fund.}

The aim of this section, is to revisit a couple of results on hermitian Higgs bundles previously known in literature \cite{Bruzzo-Granha}. Here, we show that such results admit generalizations that can be viewed as natural extensions of propositions \ref{Prop. 2, classical} and 
\ref{Prop. 3, classical}. We include here alternative proofs of these results without using local computations. In this section we also recall relevant information about Higgs bundles and fix the remaining part of the terminology that will be used throughout the article. In order to do this, we begin here reviewing some standard definitions on Higgs bundles (for more details the reader can see \cite{Cardona 7, Cardona 9} or the pioneering work \cite{Simpson}).

A {\it hermitian Higgs bundle} is a triple $({\mathfrak E},h) = (E,\Phi, h)$, where ${\mathfrak E}=(E,\Phi)$ is a Higgs bundle \cite{Simpson} and $(E,h)$ is a holomorphic hermitian vector bundle \cite{Kobayashi}. In particular, the Higgs field 
$\Phi\in A^{1,0}({\rm End}E)$ is holomorphic and satisfies $\Phi\wedge\Phi = 0$. Using the hermitian metric $h$ one has a hermitian conjugate ${\bar\Phi}_{h}\in A^{0,1}({\rm End} E)$ of the Higgs field and we get in the Higgs bundles setting the following connection:
\begin{equation}
{\cal D}_{h} = D_{h} + \Phi + {\bar\Phi}_{h}\,,   \label{Def. HS-conn.}
\end{equation} 
usually known as the {\it Hitchin-Simpson connection} of $({\mathfrak E},h)$ \cite{Bruzzo-Granha}. A straightforward computation shows that its curvature ${\cal R}_{h} = {\cal D}_{h}\wedge{\cal D}_{h}$ is given by
\begin{equation}
{\cal R}_{h} = R_{h} + D'_{h}\Phi + d''{\bar\Phi}_{h} + [\Phi,{\bar\Phi}_{h}]\,.   \label{Def. cal R}
\end{equation}
Here, as we mentioned before, $R_{h}$ is the Chern curvature, $D'_{h}$ and $d''$ are the $(1,0)$ and $(0,1)$ parts of $D_{h}$ and the last term is the usual graded commutator in the space of 
${\rm End}E$-valued forms \cite{Cardona 9}, i.e.,   
\begin{equation*}
 [\Phi,{\bar\Phi}_{h}] =  \Phi\wedge{\bar\Phi}_{h} + {\bar\Phi}_{h}\wedge\Phi\,. 
\end{equation*}

If ${\mathfrak E} = (E,\Phi)$ and ${\mathfrak E}' = (E',\Phi')$ are  Higgs bundles (over the same $M$), a {\it morphism of Higgs bundles} $f: {\mathfrak E}\longrightarrow {\mathfrak E'}$ is just a mapping  
$f: E\longrightarrow E'$ such that 
\begin{equation*}
\Phi'\circ f = (f\otimes I)\circ\Phi\,, 
\end{equation*}
where $I$ denotes here the identity morphism of $\Omega^{1,0}$, i.e., it is a morphism of the corresponding 
holomorphic vector bundles in which the obvious square diagram commutes. If ${\mathfrak E} = (E,\Phi)$ is a Higgs bundle, a {\it Higgs subbundle} ${\mathfrak S}\subset {\mathfrak E}$ is a $\Phi$-invariant holomorphic subbundle $S\subset E$. Hence, if $\Phi\lvert_S$ is the restriction of the Higgs field $\Phi$ to $S$, the pair $(S,\Phi\lvert_S)$ is itself a Higgs bundle and the inclusion induces a morphism of Higgs bundles ${\mathfrak S} \longrightarrow {\mathfrak E}\,$. Whenever a metric $h_S$ is defined, we simply write ${\bar\Phi}_{h_S}$ to denote the hermitian conjugate of $\Phi\lvert_S$ with respect to $h_S$. If $\mathfrak{S}$ is a Higgs subbundle and $P_S$ and $P_{S^{\perp}}$ are the standard projections onto $S$ and $S^{\perp}$, we denote by $P_S\Phi$ and $P_{S^{\perp}}\Phi$ the compositions of $P_S$ and $P_{S^{\perp}}$ with $\Phi$. A similar definition applies to ${\bar\Phi}_h$.

The following is a well-known result on Higgs bundles, however it plays a fundamental role in this article and hence we include it explicitly. 

\begin{lem} \label{Lem. 1}
If ${\mathfrak S}\subset{\mathfrak E}$ is a Higgs subbundle, there exists a short exact sequence of Higgs bundles
\begin{equation}
0 \longrightarrow {\mathfrak S} \longrightarrow {\mathfrak E} \longrightarrow {\mathfrak Q} \longrightarrow 0 \,   \label{Higgs sec.}
\end{equation}    
where ${\mathfrak Q}$ is the quotient Higgs bundle.   
\end{lem}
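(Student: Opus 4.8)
The plan is to build the quotient Higgs bundle ${\mathfrak Q}=(Q,\Phi_{Q})$ directly from the underlying short exact sequence of holomorphic bundles \eqref{classical sec.}, invoking the $\Phi$-invariance of $S$ at the single crucial point. First I would record what is already available. Since ${\mathfrak S}\subset{\mathfrak E}$ is a Higgs subbundle, $S\subset E$ is a $\Phi$-invariant holomorphic subbundle, so the restriction $\Phi_{S}=\Phi|_{S}$ makes $(S,\Phi_{S})$ a Higgs bundle and the inclusion $i\colon S\hookrightarrow E$ satisfies $\Phi\circ i = (i\otimes I)\circ\Phi_{S}$, i.e. it is a morphism of Higgs bundles. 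As holomorphic bundles we already have \eqref{classical sec.} with holomorphic projection $\pi\colon E\to Q$, where $Q=E/S$, so the only missing ingredient is a Higgs field $\Phi_{Q}$ on $Q$ rendering $\pi$ a Higgs morphism.

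The key step is to descend $\Phi$ through $\pi$. Given a local section $q\in A^{0}(Q)$, choose a lift $e\in A^{0}(E)$ with $\pi(e)=q$ and set
\begin{equation*}
\Phi_{Q}\,q := (\pi\otimes I)\,(\Phi\, e)\,.
\end{equation*}
This is independent of the chosen lift: two lifts differ by a section of $S$, and the $\Phi$-invariance of $S$ means $\Phi(S)\subset\Omega^{1,0}\otimes S$, which $\pi\otimes I$ annihilates. Hence $\Phi_{Q}\in A^{1,0}({\rm End}Q)$ is well defined and, by construction, $\Phi_{Q}\circ\pi=(\pi\otimes I)\circ\Phi$, so that $\pi\colon{\mathfrak E}\to{\mathfrak Q}$ is a morphism of Higgs bundles. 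The main (and essentially the only) obstacle is precisely this well-definedness, and it is exactly where $\Phi$-invariance of $S$ enters; everything else is formal.

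It then remains to verify that $(Q,\Phi_{Q})$ is a genuine Higgs bundle, i.e. that $\Phi_{Q}$ is holomorphic and satisfies $\Phi_{Q}\wedge\Phi_{Q}=0$. Holomorphicity is inherited because $\pi$ is a map of holomorphic bundles and $\Phi$ is holomorphic, so the descended form $\Phi_{Q}$ is again $d''$-closed. For the integrability condition I would iterate the intertwining relation to obtain $(\Phi_{Q}\wedge\Phi_{Q})\circ\pi = (\pi\otimes I)\circ(\Phi\wedge\Phi)=0$, and since $\pi$ is surjective this forces $\Phi_{Q}\wedge\Phi_{Q}=0$. With $(Q,\Phi_{Q})$ established as a Higgs bundle and both $i$ and $\pi$ shown to be Higgs morphisms sitting over the holomorphic sequence \eqref{classical sec.}, the sequence \eqref{Higgs sec.} is exact in the Higgs category, which completes the proof.
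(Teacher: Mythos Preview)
Your proof is correct and is essentially the same argument as the paper's: the paper phrases it as ``tensor \eqref{classical sec.} with $\Omega^{1,0}$ to obtain a commutative diagram with an induced $\Phi_{Q}$,'' which is precisely the categorical shorthand for your explicit descent construction $\Phi_{Q}\,q=(\pi\otimes I)(\Phi\,e)$. Your version is in fact more careful, as you explicitly verify holomorphicity and the integrability condition $\Phi_{Q}\wedge\Phi_{Q}=0$, which the paper leaves implicit.
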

\begin{proof}
Associated to $S\subset E$ we have the short exact sequence \eqref{classical sec.} with $Q=E/S$ holomorphic. By tensoring this sequence with $\Omega^{1,0}$ we get a commutative diagram with an induced Higgs field $\Phi_Q\in A^{1,0}(\mathrm{End}Q)$ for $Q$. Hence the pair $\mathfrak{Q}=(Q,\Phi_Q)$ is a Higgs bundle and we get \eqref{Higgs sec.}. 
\end{proof}

Let $({\mathfrak E},h)$ be a hermitian Higgs bundle and ${\mathfrak S}\subset{\mathfrak E}$ a Higgs subbundle. Since $S$ is $\Phi$-invariant it follows that $S^{\bot}$ is ${\bar\Phi}_{h}$-invariant. In fact, if $\eta\in A^{0}(S^{\bot})$ and $\xi\in A^{0}(S)$, then we get $h(\bar\Phi_{h}\eta,\xi) = h(\eta,\Phi \xi) = 0$ and $\bar\Phi_{h}\eta \in A^{1}(S^{\bot})\,$. Moreover, in general $S^{\perp}$ is not $\Phi$-invariant\footnote{Let $M$ be an $1$-dimensional compact complex manifold of genus $1$, let $\mathcal{O}_M$ be its structure sheaf and choose a nowhere-vanishing $1$-form $\varphi$. Set $E=\mathcal{O}_M\oplus\mathcal{O}_M$ and define $\Phi\in A^{1,0}(\mathrm{End}E)$ by
\begin{align*}
\Phi = 
\begin{pmatrix}
0 & \varphi \\
0 & 0
\end{pmatrix}.
\end{align*}
The pair $\mathfrak{E} = (E,\Phi)$ is a rank $2$ Higgs bundle over $M$. Let $h$ be the product hermitian metric on $E$ and consider a holomorphic orthogonal frame $\{s_1,s_2\}$, so the subbundles $S = \langle s_1\rangle$ and $S^{\perp} = \langle s_2\rangle$ give an orthogonal decomposition $E = S\oplus S^{\perp}$ with $A_h = 0$ and such that $\Phi s_1 = 0$ and $\Phi s_2 = \varphi s_1$. Thus $S$ is $\Phi$-invariant but $S^{\perp}$ is not. Consequently, from Proposition \ref{Prop. 4, classical} the decomposition $E = S \oplus S^{\perp}$ is also holomorphic and the pair $\mathfrak{S} = (S,\Phi\lvert_S)$ is a Higgs subbundle (see \cite{Hitchin} for more details on this type of constructions).} and, as we will see in Section \ref{Herm. Higgs b.}, this fact plays a crucial role in the decomposition of a Higgs bundle in terms of Higgs subbundles. If $S^{\perp}$ is not $\Phi$-invariant, a minor modification of the previous arguments implies that also $S$ is not ${\bar\Phi}_h$-invariant.\footnote{In fact, if $\xi\in A^0(S)$ and $\eta\in A^0(S^{\perp})$ it follows that $0\neq h(\xi,\Phi\eta) = h({\bar\Phi}_h\xi,\eta)$ and hence, in general, ${\bar\Phi}_h\xi$ has a component in $A^1(S^{\perp})$. } Consequently, given any section $\xi\in A^0(S)$ we have
\begin{equation} \label{Phi bar in S}
{\bar\Phi}_h\xi = P_S{\bar\Phi}_h\xi + P_{S^{\perp}}{\bar\Phi}_h\xi.
\end{equation}
For future reference, we introduce the following terminology. If ${\mathfrak S}$ is a Higgs subbundle of ${\mathfrak E}$ and the holomorphic subbundle $S\subset E$ is $D_{h}$-invariant, we say that the Higgs subbundle $\mathfrak S$ is $D_{h}$-invariant. A similar definition applies to the Hitchin-Simpson connection ${\cal D}_{h}$.

If now $\xi\in A^{0}(S)$ and we use \eqref{Def. A}, \eqref{Def. HS-conn.} and \eqref{Phi bar in S} we get a decomposition
\begin{equation*}
{\cal D}_{h}\xi = (D_{h,S} + \Phi + P_S{\bar\Phi}_{h})\xi + (A_{h} + P_{S^{\perp}}{\bar\Phi}_{h})\xi \,, 
\end{equation*}
with $(D_{h,S} + \Phi + P_S{\bar\Phi}_{h})\xi\in A^{1}(S)$ and $(A_{h} + P_{S^{\perp}}{\bar\Phi}_{h})\xi\in A^{1}(S^{\bot})$. One observes that the $\mathrm{End}S$-valued form $P_S{\bar\Phi}_{h}\lvert_S:S\longrightarrow S\otimes\Omega^{0,1}$ is just the hermitian conjugate of $\Phi\lvert_S$ with respect to $h_{S}$.\footnote{If $\xi,\eta\in A^{0}(S)$ then 
\begin{equation*}
h_S({\bar\Phi}_{h_S}\xi,\eta) = h_S(\xi,\Phi\eta) = h(\xi,\Phi\eta) = h({\bar\Phi}_h\xi,\eta) = h(P_S{\bar\Phi}_{h}\xi,\eta) = h_S(P_S{\bar\Phi}_{h}\xi,\eta),
\end{equation*}
where in the penultimate equality we have used \eqref{Phi bar in S}. Therefore ${\bar\Phi}_{h_S} = P_S{\bar\Phi}_{h}\lvert_S$.}
By defining 
\begin{equation}
{\cal D}_{h,S} = D_{h,S} + \Phi\lvert_S + P_S{\bar\Phi}_{h}\lvert_S\, \label{HS conn. of S}
\end{equation}
and 
\begin{equation}\label{eq:A_cal}
{\cal A}_{h} = A_h + P_{S^{\perp}}{\bar\Phi}_{h}\lvert_S 
\end{equation}
we can rewrite the above decomposition as:
\begin{equation}
{\cal D}_{h}\xi = {\cal D}_{h,S}\xi + {\cal A}_{h}\xi\,. \label{Def. A Higgs}
\end{equation}

The expression \eqref{Def. A Higgs} is the natural extension to hermitian Higgs bundles of the classical decomposition for holomorphic vector bundles \eqref{Def. A}. From \eqref{HS conn. of S}, \eqref{Def. A Higgs} and Proposition 
\ref{Prop. 2, classical} we immediately have the following result.
\begin{pro} \label{Prop. 2, Higgs}
Let $({\mathfrak E},h)$ be a hermitian Higgs bundle and ${\cal D}_{h}$ its Hitchin-Simpson connection. Let ${\mathfrak S}\subset {\mathfrak E}$ be a Higgs subbundle. Then the operator ${\cal D}_{h,S}$ defined in \eqref{HS conn. of S} is the Hitchin-Simpson connection of $({\mathfrak S},h_{S})$ and ${\cal A}_{h}$ is a $1$-form with values in $\mathrm{Hom}(S,S^\perp)$, i.e., ${\cal D}_{h,S} = {\cal D}_{h_{S}}$ and ${\cal A}_{h}\in A^{1}({\rm Hom}(S,S^{\bot}))$.
\end{pro}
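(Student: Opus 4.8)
The plan is to verify the two asserted identities term by term, leaning on the classical Proposition \ref{Prop. 2, classical} for the Chern part and on the defining property of the hermitian conjugate for the Higgs part. By the very definition \eqref{Def. HS-conn.} applied to the hermitian Higgs bundle $({\mathfrak S},h_{S}) = (S,\Phi_{S},h_{S})$, the Hitchin-Simpson connection of $({\mathfrak S},h_{S})$ is
\begin{equation*}
{\cal D}_{h_{S}} = D_{h_{S}} + \Phi_{S} + \overline{(\Phi_{S})}_{h_{S}}\,,
\end{equation*}
so the claim ${\cal D}_{h,S} = {\cal D}_{h_{S}}$ amounts, in view of \eqref{HS conn. of S}, to matching $D_{h,S}$ with $D_{h_{S}}$ and $(\Phi + \bar\Phi_{h})\lvert_{S}$ with $\Phi_{S} + \overline{(\Phi_{S})}_{h_{S}}$.

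First I would dispose of the Chern part: by Proposition \ref{Prop. 2, classical} the operator $D_{h,S}$ appearing in \eqref{Def. A} is exactly the Chern connection $D_{h_{S}}$ of the holomorphic hermitian bundle $(S,h_{S})$, so $D_{h,S} = D_{h_{S}}$. The holomorphic part of the Higgs term is immediate as well, since $\Phi_{S}$ is by definition the restriction $\Phi\lvert_{S}$ of the Higgs field.

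The only identity requiring argument is $\bar\Phi_{h}\lvert_{S} = \overline{(\Phi_{S})}_{h_{S}}$, which I would establish by comparing defining adjoint relations. The hermitian conjugate $\bar\Phi_{h}$ is characterized by $h(\bar\Phi_{h}\zeta,\xi) = h(\zeta,\Phi\xi)$ for all sections, while $\overline{(\Phi_{S})}_{h_{S}}$ is characterized by $h_{S}(\overline{(\Phi_{S})}_{h_{S}}\zeta,\xi) = h_{S}(\zeta,\Phi_{S}\xi)$ for $\xi,\zeta\in A^{0}(S)$. Since $h_{S}$ is the restriction of $h$ and $\Phi_{S} = \Phi\lvert_{S}$, the right-hand sides coincide for $\xi,\zeta\in A^{0}(S)$. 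Here the crucial input is the $\bar\Phi_{h}$-invariance of $S$ already noted in this section: it guarantees $\bar\Phi_{h}\zeta\in A^{1}(S)$ for $\zeta\in A^{0}(S)$, so that both $\bar\Phi_{h}\lvert_{S}\zeta$ and $\overline{(\Phi_{S})}_{h_{S}}\zeta$ are $S$-valued and satisfy the same relation against every $\xi\in A^{0}(S)$; non-degeneracy of $h_{S}$ then forces their equality. This is the step I expect to be the main (though modest) obstacle, precisely because it requires that taking hermitian conjugate commute with restriction to the subbundle, which is not automatic and rests on the invariance of $S$ under $\bar\Phi_{h}$.

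Finally, for the statement about $A_{h}$, I would observe that the form $A_{h}$ occurring in \eqref{Def. A Higgs} is literally the same as the one in the classical decomposition \eqref{Def. A}: in computing ${\cal D}_{h}\xi$ via \eqref{Def. HS-conn.} the terms $\Phi\xi$ and $\bar\Phi_{h}\xi$ both lie in $A^{1}(S)$ (by the $\Phi$- and $\bar\Phi_{h}$-invariance of $S$), so the entire $S^{\bot}$-component of ${\cal D}_{h}\xi$ comes from $D_{h}\xi$ and equals $A_{h}\xi$. Hence $A_{h}$ is precisely the second fundamental form of Proposition \ref{Prop. 2, classical}, and in particular $A_{h}\in A^{1,0}({\rm Hom}(S,S^{\bot}))$, which completes the argument.
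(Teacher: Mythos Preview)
Your argument is correct and follows the same route as the paper: the paper simply records that the result is immediate from \eqref{HS conn. of S}, \eqref{Def. A Higgs} and Proposition \ref{Prop. 2, classical}, and you have unpacked precisely this, making explicit the one point the paper leaves implicit, namely that $\bar\Phi_{h}\lvert_{S} = \overline{(\Phi_{S})}_{h_{S}}$ thanks to the $\bar\Phi_{h}$-invariance of $S$.
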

This result can be seen as an extension to Higgs bundles of Proposition \ref{Prop. 2, classical}. We would like to emphasize that, strictly speaking, Proposition \ref{Prop. 2, Higgs} is also a generalization of Proposition 2.3 (parts (i) and (ii)) in \cite{Bruzzo-Granha}. Moreover, their result is established and proved using a different approach. In particular, in that work the authors introduce the main objects using local unitary frame fields. Here we make explicit that Proposition \ref{Prop. 2, Higgs} is closely related to a well-known result in holomorphic hermitian vector bundles and that it can be obtained without using local computations. Since in general $S^{\perp}$ is not $\Phi$-invariant, given any section $\eta\in A^0(S^{\perp})$ we have
\begin{equation}\label{Phi in S perp}
\Phi\eta = P_{S^{\perp}}\Phi\eta + P_S\Phi\eta.
\end{equation}
On the other hand, if $\eta\in A^{0}(S^{\bot})$ and we use \eqref{Def. HS-conn.} and \eqref{Def. B} we get a decomposition
\begin{equation*}
{\cal D}_{h}\eta = (B_{h} + P_S\Phi)\eta + (D_{h,S^{\bot}} + P_{S^{\perp}}\Phi + \bar\Phi_{h})\eta \,, 
\end{equation*}
with $(D_{h,S^{\perp}} + P_{S^{\perp}}\Phi + \bar\Phi_{h})\eta\in A^{1}(S^{\bot})$ and $(B_{h} + P_S\Phi)\eta\in A^{1}(S)$. Notice that this time we are using that $S^{\bot}$ is $\bar\Phi_{h}$-invariant and the $\mathrm{End}S^{\perp}$-valued form ${\bar\Phi}_h\lvert_{S^{\perp}}:S^{\perp}\longrightarrow S^{\perp}\otimes\Omega^{0,1}$ is the hermitian conjugate of $P_{S^{\perp}}\Phi\lvert_{S^{\perp}}:S^{\perp}\longrightarrow S^{\perp}\otimes\Omega^{1,0}$ with respect to $h_{S^{\perp}}$.\footnote{In fact, if $\xi,\eta\in A^{0}(S^{\perp})$ we obtain
\begin{equation*}
h_{S^{\perp}}((\overline{P_{S^{\perp}}\Phi})_{h_{S^{\perp}}}\xi,\eta) = h_{S^{\perp}}(\xi,P_{S^{\perp}}\Phi\eta) = h(\xi,P_{S^{\perp}}\Phi\eta) = h(\xi,\Phi\eta) = h({\bar\Phi}_h\xi,\eta) = h_{S^{\perp}}({\bar\Phi}_{h}\xi,\eta),
\end{equation*}
where the third equality follows from \eqref{Phi in S perp}. Consequently $(\overline{P_{S^{\perp}}\Phi})_{h_{S^{\perp}}} = {\bar\Phi}_{h}\lvert_{S^{\perp}}$.}  By defining 
\begin{equation}
{\cal D}_{h,S^{\bot}} =  D_{h,S^{\bot}} + P_{S^{\perp}}\Phi\lvert_{S^{\perp}} + \bar\Phi_{h}\lvert_{S^{\perp}}\, \label{Op. conn. of S orth.}
\end{equation}
and
\begin{equation}
{\cal B}_h = B_{h} + P_S\Phi\lvert_{S^{\perp}},   \label{eq:B_cal} 
\end{equation}
we can rewrite the above decomposition as: 
\begin{equation}
{\cal D}_{h}\eta = {\cal B}_{h}\eta + {\cal D}_{h,S^{\bot}}\eta\,. \label{Def. B Higgs}
\end{equation}
The expression \eqref{Def. B Higgs} is the natural extension to hermitian Higgs bundles of the classical decomposition for holomorphic hermitian vector bundles \eqref{Def. B}. Now, via the aforementioned isomorphism and using \eqref{Def. D_h,Q}
we can consider 
\begin{equation}
{\cal D}_{h,S^{\bot}} \equiv {\cal D}_{h,Q} : A^{0}(Q) \longrightarrow A^{1}(Q)  \label{Def. {cal D}_h,Q}
\end{equation} 
and from \eqref{Op. conn. of S orth.}-\eqref{Def. {cal D}_h,Q}, Proposition \ref{Prop. 3, classical} and Lemma \ref{Lem. 1} we immediately get the following result.

 \begin{pro} \label{Prop. 3, Higgs}
 Let $({\mathfrak E},h)$, ${\cal D}_{h}$ and ${\mathfrak S}$ as in Proposition \ref{Prop. 2, Higgs} and ${\mathfrak Q}$ the quotient 
 Higgs bundle given by Lemma \ref{Lem. 1}. Then the operator ${\cal D}_{h,Q}$ defined in 
 \eqref{Def. {cal D}_h,Q} is the Hitchin-Simpson connection of $({\mathfrak Q},h_{Q})$ and ${\cal B}_{h}$ is a $1$-form with values in ${\rm Hom}(S^{\bot},S)$, i.e., ${\cal D}_{h,Q} = {\cal D}_{h_{Q}}$ and ${\cal B}_{h}\in A^{1}({\rm Hom}(S^{\bot},S))$.
 \end{pro}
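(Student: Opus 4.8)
The plan is to follow exactly the pattern used for Proposition \ref{Prop. 2, Higgs}, working now with the orthogonal complement $S^{\bot}$ in place of $S$ and invoking Proposition \ref{Prop. 3, classical} instead of Proposition \ref{Prop. 2, classical}. First I would fix the decomposition \eqref{Def. B Higgs} together with the definition \eqref{Op. conn. of S orth.} and the identification \eqref{Def. {cal D}_h,Q}. The assertion about $B_{h}$ is the easy half: writing ${\cal D}_{h}\eta = D_{h}\eta + (\Phi + {\bar\Phi}_{h})\eta$ for $\eta\in A^{0}(S^{\bot})$ and using that $S^{\bot}$ is invariant under both $\Phi$ and ${\bar\Phi}_{h}$, the term $(\Phi + {\bar\Phi}_{h})\eta$ lies entirely in $A^{1}(S^{\bot})$. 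Hence the $A^{1}(S)$-component of ${\cal D}_{h}\eta$ equals the $A^{1}(S)$-component of $D_{h}\eta$, which by \eqref{Def. B} is precisely $B_{h}\eta$. So $B_{h}$ is literally the classical form, and Proposition \ref{Prop. 3, classical} yields $B_{h}\in A^{0,1}({\rm Hom}(S^{\bot},S))$ immediately.

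For the main assertion I would compare ${\cal D}_{h,Q}$ with the Hitchin-Simpson connection ${\cal D}_{h_{Q}} = D_{h_{Q}} + \Phi_{Q} + {\bar\Phi}_{Q,h_{Q}}$ of $({\mathfrak Q},h_{Q})$, where $\Phi_{Q}$ is the induced Higgs field from Lemma \ref{Lem. 1} and ${\bar\Phi}_{Q,h_{Q}}$ its hermitian conjugate with respect to $h_{Q}$. The Chern parts already agree, since Proposition \ref{Prop. 3, classical} gives $D_{h,Q} = D_{h_{Q}}$, so it remains to match the two Higgs contributions through the $C^{\infty}$ isomorphism $Q\cong S^{\bot}$. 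For the holomorphic part this is direct: because $S^{\bot}$ is $\Phi$-invariant, $\Phi\eta\in A^{1,0}(S^{\bot})$ for $\eta\in A^{0}(S^{\bot})$, and projecting to $Q$ shows that $\Phi_{Q}$ corresponds to $\Phi\lvert_{S^{\bot}}$ under the isomorphism.

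The step I expect to be the real obstacle is the compatibility of the hermitian conjugate with the quotient, namely that ${\bar\Phi}_{Q,h_{Q}}$ corresponds to ${\bar\Phi}_{h}\lvert_{S^{\bot}}$. Since $h_{Q}$ is by construction the metric transported from $h\lvert_{S^{\bot}}$, it is enough to check that the hermitian conjugate of $\Phi\lvert_{S^{\bot}}$ computed with $h_{S^{\bot}}$ coincides with ${\bar\Phi}_{h}\lvert_{S^{\bot}}$. I would verify this from the defining relation of the hermitian conjugate on $E$: for $\eta,\zeta\in A^{0}(S^{\bot})$ one has $h(\Phi\eta,\zeta) = h(\eta,{\bar\Phi}_{h}\zeta)$, and the ${\bar\Phi}_{h}$-invariance of $S^{\bot}$ guarantees that ${\bar\Phi}_{h}\zeta$ already lies in $A^{0,1}(S^{\bot})$, so the identity takes place entirely within $(S^{\bot},h_{S^{\bot}})$. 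This is exactly the statement that ${\bar\Phi}_{h}\lvert_{S^{\bot}}$ is the hermitian conjugate of $\Phi\lvert_{S^{\bot}}$ for $h_{S^{\bot}}$; transporting it through the isometry $S^{\bot}\cong Q$ gives ${\bar\Phi}_{Q,h_{Q}}\cong{\bar\Phi}_{h}\lvert_{S^{\bot}}$. Combining the three identifications with \eqref{Op. conn. of S orth.} and \eqref{Def. {cal D}_h,Q} then yields ${\cal D}_{h,Q}\cong D_{h,S^{\bot}} + (\Phi + {\bar\Phi}_{h})\lvert_{S^{\bot}} = {\cal D}_{h,S^{\bot}}$, which matches ${\cal D}_{h_{Q}}$ and completes the argument. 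Apart from this conjugate-compatibility check, everything is a direct transcription of Proposition \ref{Prop. 3, classical} into the Higgs setting.
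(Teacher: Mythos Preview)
Your argument is correct and follows exactly the route the paper takes: both deduce the statement from the definitions \eqref{Op. conn. of S orth.}--\eqref{Def. {cal D}_h,Q}, Proposition \ref{Prop. 3, classical}, and Lemma \ref{Lem. 1}. The paper simply declares the result ``immediate'' from these references, whereas you spell out the only nontrivial point --- that ${\bar\Phi}_{Q,h_{Q}}$ corresponds to ${\bar\Phi}_{h}\lvert_{S^{\bot}}$ under the isometry $Q\cong S^{\bot}$ --- which the paper leaves implicit.
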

 
The above proposition is a Higgs extension of the classical Proposition \ref{Prop. 3, classical}. Again, here we make explicit that Proposition \ref{Prop. 3, Higgs} is closely related to a well-known result in holomorphic hermitian vector bundles and that it can be obtained without using local computations. In general, the forms $\mathcal{A}_{h}$ and $\mathcal{B}_{h}$ in propositions \ref{Prop. 2, Higgs} and \ref{Prop. 3, Higgs} are not equal to the classical forms $A_{h}$ and $B_{h}$ of the corresponding holomorphic objects. In fact, the operators $P_{S^{\perp}}{\bar\Phi}_h\lvert_S$ and $P_S\Phi\lvert_{S^{\perp}}$ in \eqref{eq:A_cal} and  \eqref{eq:B_cal} represent the obstructions to these equalities. To be more precise, if $({\mathfrak E},h)$ is a hermitian Higgs bundle and ${\mathfrak S}\subset{\mathfrak E}$ is a Higgs subbundle, the forms ${\cal A}_{h}$ and ${\cal B}_{h}$ of ${\mathfrak S}$ in $({\mathfrak E},h)$ are not equal to the classical forms $A_{h}$ and $B_{h}$ of $S$ in $(E,h)$ unless additional conditions are imposed. Now, as in the case of holomorphic hermitian vector bundles, \eqref{Def. A Higgs} and \eqref{Def. B Higgs} can be extended to decompositions for $s$-forms evaluated on $S$ and $S^{\bot}$, respectively. Therefore, for any 
$\xi\in A^{0}(S)$ we get
\begin{equation*}
{\cal R}_{h}\xi = ({\cal R}_{h,S} + {\cal B}_{h}\wedge {\cal A}_{h}+ {\cal D}_{h}{\cal A}_{h})\xi\,,
\end{equation*}
and if  $\eta\in A^{0}(S^{\bot})$ we obtain
\begin{equation*}
{\cal R}_{h}\eta = ({\cal R}_{h,S^{\bot}} + {\cal A}_{h}\wedge {\cal B}_{h} + {\cal D}_{h}{\cal B}_{h})\eta\,.
\end{equation*}
As it is usual, the above expressions can be used to rewrite ${\cal R}_{h}$ in a matrix form as:
\begin{eqnarray}\label{R_h Higgs}
\mathcal{R}_h =
\begin{pmatrix}
\mathcal{R}_{h,S} + {\cal B}_h\wedge {\cal A}_h & \mathcal{D}_h{\cal B}_h \\
\mathcal{D}_h{\cal A}_h & \mathcal{R}_{h,S^\bot} + {\cal A}_h\wedge {\cal B}_h
\end{pmatrix},
\end{eqnarray}
which is the Higgs extension of the Chern curvature matrix formula \eqref{R_h Koba}. The expression \eqref{R_h Higgs} can be considered as the Gauss-Codazzi type equations for hermitian Higgs bundles.


\section{Orthogonal decompositions of hermitian Higgs bundles} \label{Herm. Higgs b.}

The aim of this section is to establish extensions of propositions \ref{Prop. 1, classical} and \ref{Prop. 4, classical} to the Higgs bundles setting. In particular, we show that these extended results on Higgs bundles can be obtained from the usual definitions and results on (holomorphic) Higgs bundles if additional invariance conditions hold. Hence, strictly speaking, the above extensions rely only on the standard notions given in the previous sections.

Let $\mathcal{S}\subset\mathfrak{E}$ be a Higgs subbundle. In order to extend propositions \ref{Prop. 1, classical} and \ref{Prop. 4, classical} lets asume that $\mathcal{S}$ is also ${\bar\Phi}_h$-invariant. Notice that this condition is equivalent to $S^{\perp}$ being $\Phi$-invariant. From the above we immediately get the following result.
\begin{lem}\label{Lem. 2}
If $\mathfrak{S} \subset \mathfrak{E}$ is a Higgs subbundle that is ${\bar\Phi}_h$-invariant, then the holomorphic subbundle $S \subset E$ is $D_h$-invariant if and only if it is $\mathcal{D}_h$-invariant.
\end{lem}

According to the terminology introduced in sections \ref{Intro.} and \ref{Higgs 2nd. fund.}, we are now in a position to establish the following result. 

\begin{pro}\label{Prop. 1, Higgs}
Let $(\mathfrak E,h)=(E,\Phi,h)$ be a hermitian Higgs bundle and $D_{h}$, $E'$ and $E''$ as in Proposition \ref{Prop. 1, classical} and suppose that $E'$ is $\Phi$-invariant and also ${\bar\Phi}_h$-invariant. Then both ${\mathfrak E}' = (E',\Phi\lvert_{E'})$ and  ${\mathfrak E}'' = (E'',\Phi\lvert_{E''})$ are $D_h$-invariant Higgs subbundles of ${\mathfrak E}=(E,\Phi)$ and they give a Higgs orthogonal decomposition: 
\begin{equation}
{\mathfrak E}= {\mathfrak E}' \oplus {\mathfrak E}''.  \label{ort. dec. Higgs} 
\end{equation}  
\end{pro}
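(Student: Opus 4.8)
The plan is to reduce the statement to the classical Proposition~\ref{Prop. 1, classical} and then to supply the only genuinely new ingredient, namely the compatibility with the Higgs field, by quoting the invariance facts already recorded in Section~\ref{Higgs 2nd. fund.}. First I would apply Proposition~\ref{Prop. 1, classical} verbatim: $E'$ is a $D_h$-invariant $C^\infty$ complex subbundle and $E''$ is its $h$-orthogonal complement, so that proposition already yields that $E'$ and $E''$ are $D_h$-invariant \emph{holomorphic} subbundles of $E$ and that $E = E' \oplus E''$ is a holomorphic orthogonal decomposition. At this point the holomorphicity, the $D_h$-invariance and the orthogonality are all settled, and only the behaviour of $\Phi$ remains.

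The second step is to promote the two summands to Higgs subbundles. Since $E'$ is holomorphic (just obtained) and $\Phi$-invariant (by hypothesis), $\mathfrak E' = (E', \Phi|_{E'})$ is a Higgs subbundle of $\mathfrak E$, with orthogonal complement $E''$. Now I would invoke the invariance argument established just before Lemma~\ref{Lem. 2}, applied to the Higgs subbundle $\mathfrak E'$: the footnote computation $h(\bar\Phi_h \eta, \xi) = h(\eta, \Phi\xi) = 0$ for $\xi \in A^0(E')$ and $\eta \in A^0(E'')$ shows that $E''$ is $\bar\Phi_h$-invariant, and then, through the $C^\infty$ isomorphism $Q \cong E''$ together with Lemma~\ref{Lem. 1}, that $E''$ is also $\Phi$-invariant. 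As $E''$ is holomorphic, this makes $\mathfrak E'' = (E'', \Phi|_{E''})$ a Higgs subbundle as well.

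It then remains only to assemble the conclusion. Both $E'$ and $E''$ are $\Phi$-invariant, so relative to $E = E' \oplus E''$ the Higgs field is block diagonal, $\Phi = \Phi|_{E'} \oplus \Phi|_{E''}$, which is exactly the statement that $\mathfrak E = \mathfrak E' \oplus \mathfrak E''$ is a direct sum in the Higgs category; and since $E'$ and $E''$ are $D_h$-invariant, $\mathfrak E'$ and $\mathfrak E''$ are $D_h$-invariant Higgs subbundles in the sense fixed after Lemma~\ref{Lem. 2} (equivalently $\mathcal D_h$-invariant, by that lemma). The step I expect to be the real obstacle is the $\Phi$-invariance of $E''$: the classical proposition is silent about $\Phi$, and $\Phi$-invariance of $E'$ does not transfer to $E''$ by any formal adjointness, so the argument must pass through the hermitian conjugate and the quotient Higgs bundle as above. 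For this reason I would be careful to verify that $\mathfrak E'$ really is a Higgs subbundle \emph{before} quoting those invariance facts, since that is the hypothesis under which they were derived.
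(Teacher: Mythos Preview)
Your proposal is correct and follows exactly the paper's own argument: first invoke Proposition~\ref{Prop. 1, classical} to get the holomorphic orthogonal splitting with both summands $D_h$-invariant, then use the $\Phi$-invariance of $E'$ to make $\mathfrak E'$ a Higgs subbundle and appeal to the discussion preceding Lemma~\ref{Lem. 2} (via Lemma~\ref{Lem. 1} and the $C^\infty$ isomorphism $Q\cong E''$) to obtain the $\Phi$-invariance of $E''$. Your additional remarks on the block-diagonal form of $\Phi$ and the equivalence with $\mathcal D_h$-invariance are accurate elaborations but go slightly beyond what the paper records.
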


\begin{proof}
From Proposition \ref{Prop. 1, classical} we know that $E',E''\subset E$ are $D_h$-invariant holomorphic subbundles and we have the holomorphic orthogonal decomposition \eqref{ort. dec. classical}, i.e., 
$E= E' \oplus E''$ in the holomorphic category. Since $E'$ is holomorphic and also $\Phi$-invariant, the pair ${\mathfrak E}' = (E',\Phi\lvert_{E'})$ is a $D_{h}$-invariant Higgs subbundle of ${\mathfrak E} = (E,\Phi)$. In particular, by defining ${\mathfrak S} = {\mathfrak E}'$ it follows that ${\mathfrak S}\subset{\mathfrak E}$ is a Higgs subbundle and hence $S^{\bot}=E''$ is also $\Phi$-invariant. Since $E''$ is holomorphic, it follows that ${\mathfrak E}'' = (E'',\Phi\lvert_{E''})$ is a $D_{h}$-invariant Higgs subbundle of ${\mathfrak E} = (E,\Phi)$ and we obtain  \eqref{ort. dec. Higgs}.
\end{proof}

The above result can be seen as an extension to Higgs bundles of Proposition \ref{Prop. 1, classical}. More precisely, Proposition \ref{Prop. 1, Higgs} guarantees that the same classical hypothesis, together with additional $\Phi$ and ${\bar\Phi}_h$-invariance conditions, implies that ${\mathfrak E}',{\mathfrak E}''\subset {\mathfrak E}$ are Higgs subbundles and that \eqref{ort. dec. Higgs} is a decomposition of the original Higgs bundle in terms of these Higgs subbundles. In particular, let $(\mathfrak E,h)=(E,\Phi,h)$ be a hermitian Higgs bundle and let us consider again the decomposition \eqref{ort.dec. classical} for the holomorphic bundle $E$. If $E^{0}, ..., E^{k}$ are all $\Phi$ and ${\bar\Phi}_h$-invariant and $\Phi_{0}, \Phi_{1}, ... , \Phi_{k}$ denote the restrictions of $\Phi$ to these bundles, respectively, Proposition \ref{Prop. 1, Higgs} immediately implies that ${\mathfrak E}^{0}= (E^{0},\Phi_{0}), {\mathfrak E}^{1}= (E^{1},\Phi_{1}), ... ,{\mathfrak E}^{k}= (E^{k},\Phi_{k})$ are all Higgs subbundles of the original Higgs bundle ${\mathfrak E}$. To be precise, in such a case 
\begin{equation}
{\mathfrak E} = {\mathfrak E}^{0} \oplus {\mathfrak E}^{1} \oplus \cdots \oplus {\mathfrak E}^{k}\,.  
\end{equation}
is a Higgs orthogonal decomposition of ${\mathfrak E}$.

\begin{pro}\label{Prop. 4, Higgs}
Let $(\mathfrak E,h)=(E,\Phi,h)$ be a hermitian Higgs bundle, $D_{h}$ the Chern connection of $(E,h)$ and $S\subset E$ a holomorphic subbundle. If the second fundamental form $A_{h}$ of $S$ on $(E,h)$ vanishes identically and $S$ is $\Phi$ and ${\bar\Phi}_h$-invariant, then ${\mathfrak S} = (S,\Phi\lvert_{S})$ and ${\mathfrak S}^{\bot} = (S^{\bot},\Phi\lvert_{S^{\bot}})$ are Higgs subbundles of ${\mathfrak E}=(E,\Phi)$ and 
\begin{equation}
{\mathfrak E} = {\mathfrak S} \oplus {\mathfrak S}^{\bot} \label{Dec. S, S^bot, Higgs}
\end{equation}
is a Higgs orthogonal decomposition. 
\end{pro}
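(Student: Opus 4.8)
The plan is to reduce Proposition \ref{Prop. 4, Higgs} to the already-established Proposition \ref{Prop. 1, Higgs}, exactly mirroring how the classical Proposition \ref{Prop. 4, classical} was shown to be a straightforward consequence of the classical Proposition \ref{Prop. 1, classical}. The key observation is that the vanishing of the second fundamental form $A_{h}$ is precisely the condition that converts the $C^{\infty}$ orthogonal decomposition into a $D_{h}$-invariant one. Concretely, for any $\xi\in A^{0}(S)$ the decomposition \eqref{Def. A} reads $D_{h}\xi = D_{h,S}\xi + A_{h}\xi$ with $D_{h,S}\xi\in A^{1}(S)$ and $A_{h}\xi\in A^{1}(S^{\bot})$; if $A_{h}\equiv 0$ then $D_{h}\xi = D_{h,S}\xi\in A^{1}(S)$, which says exactly that $S\subset E$ is a $D_{h}$-invariant $C^{\infty}$ complex subbundle.

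First I would apply the hypothesis $A_{h}\equiv 0$ together with \eqref{Def. A} to conclude, as above, that $S$ is $D_{h}$-invariant. Then I would invoke the classical Proposition \ref{Prop. 1, classical} with $E' = S$ and $E'' = S^{\bot}$: it guarantees that both $S$ and $S^{\bot}$ are $D_{h}$-invariant \emph{holomorphic} subbundles of $E$ and that \eqref{Dec. S, S^bot, classical} is a holomorphic orthogonal decomposition. At this stage $S$ and $S^{\bot}$ have been upgraded from $C^{\infty}$ subbundles to genuine holomorphic subbundles, so the remaining work is purely to promote this holomorphic decomposition to a decomposition in the Higgs category.

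Next I would feed this conclusion into Proposition \ref{Prop. 1, Higgs}. Since $S$ is now a $D_{h}$-invariant holomorphic subbundle and is assumed $\Phi$-invariant, the triple $(\mathfrak{E},h)$ together with the data $D_{h}$, $E' = S$, $E'' = S^{\bot}$ satisfies all the hypotheses of Proposition \ref{Prop. 1, Higgs} (its extra $\Phi$-invariance assumption on $E'=S$ is exactly our standing hypothesis). Applying it directly yields that $\mathfrak{S} = (S,\Phi\lvert_{S})$ and $\mathfrak{S}^{\bot} = (S^{\bot},\Phi\lvert_{S^{\bot}})$ are $D_{h}$-invariant Higgs subbundles of $\mathfrak{E}$ and that \eqref{Dec. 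S, S^bot, Higgs} holds as a Higgs orthogonal decomposition. Alternatively, one can argue the $\Phi$-invariance of $S^{\bot}$ directly via Lemma \ref{Lem. 1}, as was done inside the proof of Proposition \ref{Prop. 1, Higgs}: setting $\mathfrak{S} = (S,\Phi\lvert_{S})$, the quotient $Q\cong S^{\bot}$ inherits a Higgs structure, forcing $S^{\bot}$ to be $\Phi$-invariant.

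The only genuinely substantive point, and hence the step I would watch most carefully, is the passage from the \emph{holomorphic} orthogonal decomposition supplied by Proposition \ref{Prop. 1, classical} to the \emph{Higgs} orthogonal decomposition. Everything else is a matter of checking hypotheses. The heart of this step is ensuring $\Phi$-invariance of both summands simultaneously: $\Phi$-invariance of $S$ is assumed, while $\Phi$-invariance of $S^{\bot}$ must be derived, and the clean way to do so is to recognize that once $S$ is a $\Phi$-invariant holomorphic subbundle it is a bona fide Higgs subbundle $\mathfrak{S}\subset\mathfrak{E}$, so the discussion following Lemma \ref{Lem. 1} applies verbatim and transfers $\Phi$-invariance across the orthogonal complement. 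Since these tools are already in place, the proof is short; the main obstacle is purely expository, namely citing the correct prior result (the classical Proposition \ref{Prop. 1, classical} followed by the Higgs Proposition \ref{Prop. 1, Higgs}) rather than re-deriving the invariance facts from scratch.
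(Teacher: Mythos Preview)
Your proposal is correct and essentially matches the alternative argument the paper itself sketches immediately after its main proof: the paper's primary proof invokes Proposition \ref{Prop. 4, classical} directly and then checks $\Phi$-invariance of $S^{\bot}$ via the discussion after Lemma \ref{Lem. 1}, whereas you (like the paper's alternative) reduce everything to Proposition \ref{Prop. 1, Higgs} after noting that $A_{h}\equiv 0$ forces $D_{h}$-invariance of $S$. The only cosmetic difference is that the paper's alternative passes through $\mathcal{D}_{h}$-invariance via \eqref{Def. A Higgs} and then uses Lemma \ref{Lem. 2}, while you go straight to $D_{h}$-invariance via \eqref{Def. A}; both are fine.
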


\begin{proof}
From Proposition \ref{Prop. 4, classical} we know that $S^{\bot}\subset E$ is a holomorphic subbundle and we have the holomorphic orthogonal decomposition \eqref{Dec. S, S^bot, classical}, i.e., $E = S \oplus S^{\bot}$ in the holomorphic category. Since $S\subset E$ is holomorphic and $\Phi$-invariant, the pair ${\mathfrak S} = (S,\Phi\lvert_{S})$ is a Higgs subbundle of 
${\mathfrak E} = (E,\Phi)$. In particular, $S^{\bot}$ is $\Phi$-invariant and since it is also holomorphic, the pair 
${\mathfrak S}^{\bot} = (S^{\bot},\Phi\lvert_{S^{\bot}})$ is a Higgs subbundle of ${\mathfrak E} = (E,\Phi)$ and we obtain \eqref{Dec. S, S^bot, Higgs}. 
\end{proof}

The above proposition can be seen as a Higgs extension of Proposition \ref{Prop. 4, classical}. In particular, Proposition \ref{Prop. 4, Higgs} guarantees that the same classical hypothesis, together with additional $\Phi$ and ${\bar\Phi}_h$-invariance conditions, implies that ${\mathfrak S},{\mathfrak S}^{\bot}\subset {\mathfrak E}$ are Higgs subbundles and that \eqref{Dec. S, S^bot, Higgs} is a decomposition of the original Higgs bundle in terms of these Higgs subbundles.
 As in the classical case, the converse of Proposition \ref{Prop. 4, Higgs} is also true. In fact, if ${\mathfrak E}={\mathfrak S}\oplus {\mathfrak S}^{\bot}$ is a Higgs decomposition, the pairs $({\mathfrak S},h_{S})$ and $({\mathfrak S}^{\bot},h_{S^{\bot}})$ are both hermitian Higgs bundles. If ${\cal D}_{h_{S}}$ and ${\cal D}_{h_{S^{\bot}}}$ are the Hitchin-Simpson connections of these pairs, then ${\cal D}_{h}$ is the direct sum of ${\cal D}_{h_{S}}$ and ${\cal D}_{h_{S^{\bot}}}$ and the result follows from \eqref{eq:A_cal}, \eqref{Def. A Higgs} and Proposition \ref{Prop. 2, Higgs}. 

We are now in a position to mention some applications of the above results in the Higgs bundles setting. Recall that in Section \ref{Higgs 2nd. fund.} we have mentioned that the form ${\cal A}_{h}$ is the same as the second fundamental form $A_{h}$ of the corresponding holomorphic subbundle, provided the Higgs bundle satisfies an additional ${\bar\Phi}_h$-invariance condition. From this fact, the previous paragraph and Proposition \ref{Prop. 4, Higgs}, we immediately get the following result.

\begin{cor}
Let $(\mathfrak E,h)=(E,\Phi,h)$ be a hermitian Higgs bundle and ${\cal D}_{h}$ its Hitchin-Simpson connection. Let ${\mathfrak S}\subset {\mathfrak E}$ be a Higgs subbundle that is ${\bar\Phi}_h$-invariant. Then, the second fundamental form $\mathcal{A}_{h}$ of ${\mathfrak S}$ in $({\mathfrak E},h)$ given by \eqref{Def. A Higgs} vanishes identically if and only if ${\mathfrak E} = {\mathfrak S} \oplus {\mathfrak S}^{\bot}$ is a Higgs orthogonal decomposition. 
\end{cor}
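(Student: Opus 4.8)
The plan is to prove the two implications separately, reducing each to a result already established. For the implication ``$A_{h}\equiv 0 \Rightarrow$ decomposition'' I would invoke Proposition \ref{Prop. 4, Higgs} directly. Since ${\mathfrak S}\subset{\mathfrak E}$ is a Higgs subbundle, the underlying $S\subset E$ is a holomorphic $\Phi$-invariant subbundle; adjoining the hypothesis that the second fundamental form $A_{h}$ vanishes identically produces exactly the hypotheses of Proposition \ref{Prop. 4, Higgs}, whose conclusion is precisely the Higgs orthogonal decomposition ${\mathfrak E}={\mathfrak S}\oplus{\mathfrak S}^{\bot}$.

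For the converse ``decomposition $\Rightarrow A_{h}\equiv 0$'' I would argue via the splitting of the Hitchin-Simpson connection. Assuming ${\mathfrak E}={\mathfrak S}\oplus{\mathfrak S}^{\bot}$ is a Higgs orthogonal decomposition, the summands become hermitian Higgs bundles $({\mathfrak S},h_{S})$ and $({\mathfrak S}^{\bot},h_{S^{\bot}})$. The crucial step is to check that ${\cal D}_{h}$ decomposes as the direct sum ${\cal D}_{h_{S}}\oplus{\cal D}_{h_{S^{\bot}}}$. Granting this, for any $\xi\in A^{0}(S)$ one has ${\cal D}_{h}\xi={\cal D}_{h_{S}}\xi\in A^{1}(S)$; comparing with the canonical decomposition \eqref{Def. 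A Higgs}, in which ${\cal D}_{h,S}\xi\in A^{1}(S)$ while $A_{h}\xi\in A^{1}(S^{\bot})$, and using the directness of $E=S\oplus S^{\bot}$, forces $A_{h}\xi=0$ for every such $\xi$. Hence $A_{h}$ vanishes identically.

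The one step deserving care is the splitting of ${\cal D}_{h}$, which I expect to be the main (though still routine) point. By \eqref{Def. HS-conn.} we have ${\cal D}_{h}=D_{h}+\Phi+\bar\Phi_{h}$, so it suffices that each summand respect the decomposition. For the Chern connection this is classical: because $E=S\oplus S^{\bot}$ is a holomorphic orthogonal decomposition, $D_{h}=D_{h_{S}}\oplus D_{h_{S^{\bot}}}$, equivalently the second fundamental form and its partner $B_{h}$ of Propositions \ref{Prop. 2, classical} and \ref{Prop. 3, classical} both vanish. For the Higgs term, the fact that both $S$ and $S^{\bot}$ are Higgs subbundles means both are $\Phi$-invariant, so $\Phi=\Phi\lvert_{S}\oplus\Phi\lvert_{S^{\bot}}$, and the same holds for $\bar\Phi_{h}$ since it is the hermitian conjugate taken with respect to $h=h_{S}\oplus h_{S^{\bot}}$. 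Thus ${\cal D}_{h}$ is the asserted direct sum, which is exactly the observation already recorded in the converse discussion following Proposition \ref{Prop. 4, Higgs}. Since both directions reduce cleanly to the established propositions together with the orthogonality of the decomposition, I anticipate no genuine obstacle.
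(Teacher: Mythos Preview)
Your proposal is correct and follows essentially the same approach as the paper: the forward direction is exactly Proposition~\ref{Prop. 4, Higgs}, and the converse is precisely the splitting argument for ${\cal D}_{h}$ recorded in the paragraph following Proposition~\ref{Prop. 4, Higgs}, with the vanishing of $A_{h}$ read off from \eqref{Def. A Higgs}. Your write-up in fact supplies more detail than the paper on why ${\cal D}_{h}$ splits (treating $D_{h}$, $\Phi$, and $\bar\Phi_{h}$ separately), but the route is the same.
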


We would like to emphasize that this result can be completely established in terms of objects in the Higgs category. Therefore, it is natural to wonder if there exists another proposition, closely related to Proposition \ref{Prop. 1, Higgs}, and which can be established using only objects in the Higgs category. As we will see in a moment, it is indeed the case and Lemma \ref{Lem. 2} is a key result in the proof of such a proposition. Before doing that, let us introduce the following definition. Let 
${\mathfrak E}'\subset{\mathfrak E}$ be a Higgs subbundle and 
$E''\subset E$ the orthogonal complement of $E'$ with respect to $h$. Let us consider the pair 
${\mathfrak E}'' = (E'',\Phi\lvert_{E''})$, which we call the $C^{\infty}$ {\it Higgs orthogonal complement} of the Higgs subbundle ${\mathfrak E}'$ with respect to $h$. The bundle $E''\subset E$ is just a $C^{\infty}$ complex subbundle (not necessarily holomorphic) and hence, strictly speaking, ${\mathfrak E}''$ is not necessarily a Higgs subbundle of ${\mathfrak E}$. However, that will be the case if a certain invariant condition holds. To be precise, we have the following result.   

\begin{cor} \label{Cor. 2 Higgs}
Let $(\mathfrak E,h)=(E,\Phi,h)$ be a hermitian Higgs bundle and ${\cal D}_{h}$ its Hitchin-Simpson connection. Let ${\mathfrak E}'\subset {\mathfrak E}$ be a ${\cal D}_{h}$ and ${\bar\Phi}_h$-invariant Higgs subbundle and ${\mathfrak E}''$ the $C^{\infty}$ Higgs orthogonal complement of ${\mathfrak E}'$ with respect to $h$. Then ${\mathfrak E}''\subset {\mathfrak E}$ is also a ${\cal D}_{h}$-invariant Higgs subbundle and ${\mathfrak E} = {\mathfrak E}' \oplus {\mathfrak E}''$ is a Higgs orthogonal decomposition. 
\end{cor}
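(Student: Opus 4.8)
The plan is to recast the hypothesis in terms of the Chern connection, invoke Proposition \ref{Prop. 1, Higgs}, and then translate the conclusion back to the Hitchin-Simpson connection; the two translations are precisely the content of Lemma \ref{Lem. 2}. First I would observe that, since ${\mathfrak E}'\subset{\mathfrak E}$ is a Higgs subbundle, the underlying $E'\subset E$ is a $\Phi$-invariant holomorphic subbundle, and the assumption that ${\mathfrak E}'$ is ${\cal D}_{h}$-invariant means precisely that $E'$ is ${\cal D}_{h}$-invariant. Applying Lemma \ref{Lem. 2} to ${\mathfrak S}={\mathfrak E}'$ then shows that $E'$ is also $D_{h}$-invariant.

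At this point $E'$ is a $D_{h}$-invariant holomorphic (in particular $C^{\infty}$ complex) subbundle that is $\Phi$-invariant, which are exactly the hypotheses of Proposition \ref{Prop. 1, Higgs}. Invoking that result directly yields that both ${\mathfrak E}'=(E',\Phi\lvert_{E'})$ and the Higgs orthogonal complement ${\mathfrak E}''=(E'',\Phi\lvert_{E''})$ are $D_{h}$-invariant Higgs subbundles of ${\mathfrak E}$ and that ${\mathfrak E}={\mathfrak E}'\oplus{\mathfrak E}''$ is a Higgs orthogonal decomposition. This already gives the decomposition claimed in the corollary, together with the fact that ${\mathfrak E}''$ is a genuine Higgs subbundle, i.e. that $E''$ is holomorphic and $\Phi$-invariant.

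It remains only to upgrade the $D_{h}$-invariance of ${\mathfrak E}''$ to ${\cal D}_{h}$-invariance. Since ${\mathfrak E}''$ is now known to be a Higgs subbundle whose underlying $E''$ is $D_{h}$-invariant, I would apply Lemma \ref{Lem. 2} a second time, this time to ${\mathfrak S}={\mathfrak E}''$, to conclude that $E''$ is ${\cal D}_{h}$-invariant, that is, ${\mathfrak E}''$ is a ${\cal D}_{h}$-invariant Higgs subbundle, which finishes the proof.

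I expect the only delicate point to be the ordering of the two applications of Lemma \ref{Lem. 2}: that lemma presupposes that one is dealing with a Higgs subbundle, so it cannot be applied to the complement $E''$ until Proposition \ref{Prop. 1, Higgs} has certified that $E''$ is holomorphic and $\Phi$-invariant. Thus the logical route must run ${\cal D}_{h}$-invariance $\Rightarrow$ $D_{h}$-invariance on $E'$, then Proposition \ref{Prop. 1, Higgs}, and only afterwards $D_{h}$-invariance $\Rightarrow$ ${\cal D}_{h}$-invariance on $E''$; no local computation or Gauss-Codazzi manipulation is needed.
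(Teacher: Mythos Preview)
Your proof is correct and follows exactly the same route as the paper: apply Lemma \ref{Lem. 2} to pass from ${\cal D}_{h}$- to $D_{h}$-invariance on $E'$, invoke Proposition \ref{Prop. 1, Higgs}, and then apply Lemma \ref{Lem. 2} once more to recover ${\cal D}_{h}$-invariance on $E''$. Your remark on the ordering of the two invocations of Lemma \ref{Lem. 2} is apt and makes explicit a point the paper leaves implicit.
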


\begin{proof}
By applying Lemma \ref{Lem. 2} to the Higgs subbundle ${\mathfrak E}'\subset {\mathfrak E}$ we know that $E'$ is $D_{h}$-invariant. Since it is also $\Phi$-invariant, Proposition \ref{Prop. 1, Higgs} implies the Higgs orthogonal decomposition  ${\mathfrak E} =  {\mathfrak E}' \oplus {\mathfrak E}''$ with ${\mathfrak E}''\subset {\mathfrak E}$ also a $D_{h}$-invariant Higgs subbundle. The ${\cal D}_{h}$-invariance of this last subbundle follows again from Lemma \ref{Lem. 2}.
\end{proof}


\section{The Kobayashi functional for Higgs bundles} \label{Sec. cal J}

Let ${\mathfrak E} = (E,\Phi)$ be a Higgs bundle and $h$ a hermitian metric in ${\mathfrak E}$. If ${\cal R}_{h}$ is the Hitchin-Simpson curvature defined in \eqref{Def. cal R}, we can define the {\it Hitchin-Simpson mean curvature} ${\cal K}_{h}$ as the element in $A^{0}({\rm End}E)$ satisfying 
\begin{equation}
in {\cal R}_{h} \wedge \omega^{n-1} = {\cal K}_{h}\,\omega^{n}\,.  \label{Def. cal K}
\end{equation} 

The above definition is the natural extension to Higgs bundles of the classical definition \eqref{Def. K} for holomorphic vector bundles. A hermitian metric $h$ in ${\mathfrak E}$ is called a {\it Hermitian-Yang-Mills metric} if ${\cal K}_{h} = cI$, where $c$ is the same constant introduced in Section \ref{Intro.}. If $h$ is Hermitian-Yang-Mills, the pair $({\mathfrak E},h)$ is called a {\it Hermitian-Yang-Mills Higgs bundle}.

Let us denote by ${\rm Herm}^{+}{\mathfrak E}$ the space of hermitian metrics\footnote{It is clearly the same space ${\rm Herm}^{+}E$ defined in Section \ref{Intro.}, the change of notation is just to emphasize that we  are now interested in the Higgs bundles setting.} in the Higgs bundle ${\mathfrak E}$. If $h \in {\rm Herm}^{+}{\mathfrak E}$ is considered as a variable and ${\cal K}_{h}$ is the Hitchin-Simpson curvature defined in \eqref{Def. cal K}, then we can consider the functional ${\cal J}:{\rm Herm}^{+}{\mathfrak E} \longrightarrow {\mathbb R}$ defined by
\begin{equation}
{\cal J}(h) = \frac{1}{2}\int_{M}\lvert {\cal K}_{h}\lvert^{2}\omega^{n} = \frac{n!}{2}\lVert {\cal K}_{h}\lVert ^{2}\,  \label{Def.  cal J}
\end{equation} 
which is, up to a multiplicative constant, the natural energy functional for the Hitchin-Simpson mean curvature. The functional \eqref{Def. cal J} is called the {\it Kobayashi functional} for the Higgs bundle ${\mathfrak E}$ 
and is the obvious extension to Higgs bundles of the classical functional \eqref{Def. J} for the corresponding holomorphic vector bundle $E$. It can be shown (see \cite{Cardona 7} for details) that a hermitian metric $h$ in ${\mathfrak E}$ is Hermitian-Yang-Mills if and only if it is  a minimum of \eqref{Def. cal J} and more in general, one has the following result.

\begin{thm} {\rm (\cite{Cardona 7}, Thm. 4).} \label{Prop. 9, Higgs}
Let ${\mathfrak E}$ be a Higgs bundle. A hermitian metric $h$ is a critical point of the functional \eqref{Def.  cal J} if and only if the Hitchin-Simpson mean curvature is parallel with respect to the Hitchin-Simpson connection defined by 
$h$, i.e., if and only if 
\begin{equation}
{\cal D}_{h}{\cal K}_{h} = 0\,.   \label{DK Higgs}  
\end{equation} 
\end{thm} 
Notice that Theorem \ref{Prop. 9, Higgs} is the generalization to Higgs bundles of Theorem \ref{Prop. 5, classical} for holomorphic vector bundles. On the other hand, the Hitchin-Simpson connection ${\cal D}_{h}$ is neither compatible with the holomorphic nor the hermitian structure of $E$, a fact also mentioned in \cite{Bruzzo-Granha, Seaman}. In fact, the (0,1) part of ${\cal D}_{h}$ is $d'' + \bar\Phi_{h}$ which coincides with $d''$ if and only if $\Phi$ is identically zero. Moreover, just from the definition of the Hitchin-Simpson connection one has 
\begin{equation}
{\cal D}_{h}h = (D_{h} + \Phi + \bar\Phi_{h})h = (\Phi + \bar\Phi_{h})h\,.  \label{non par. cond.}
\end{equation}
Here we have used the fact that $h$ is parallel with respect to the Chern connection $D_{h}$. We should notice that the right hand side in the above expression does not vanish\footnote{Let $s=(s_i)$ be a local frame for $E$ and $s^*=(s^j)$ its dual frame. Then $h = \sum h_{j{\bar k}} s^{j}\otimes {\bar s}^k$ and for any connection $D$ in $E$ one has $Dh = \sum(dh_{i{\bar\jmath}} - h_{k{\bar \jmath}}\,\theta^k_i - h_{i{\bar k}}\,{\bar\theta}^k_j) s^i\otimes{\bar s}^j$ where $\theta = (\theta^k_i)$ is the connection form of $D$ (see \cite{Kobayashi}, p. 18 for details). Since $D_{h}h=0$ one has 
\begin{equation}
{\cal D}_{h}h = (\Phi + \bar\Phi_{h})h =  -\sum( h_{k{\bar \jmath}}\,\phi^k_i + h_{i{\bar k}}\,{\bar\phi}^k_j +h_{k{\bar \jmath}}(\phi^*_h)^k_i + h_{i{\bar k}}{\overline{(\phi^*_h)}}^k_j) s^i\otimes{\bar s}^j , 
\end{equation}
where $\phi^k_i$ and $({\phi^{*}_h})_{i}^k$ are the components of $\Phi$ and $\bar\Phi_{h}$. Now, ${\cal D}_{h}h=0$ if and only if its $(1,0)$ and $(0,1)$ parts are both equal to zero. In particular, if $s$ is unitary the $(1,0)$ part is $-\sum h_{k{\bar \jmath}}\,\phi^k_i - \sum h_{i{\bar k}}{\overline{(\phi^{*}_h)}}^k_j = -\phi^j_i - {\overline{(\phi^{*}_h)}}^i_j = -2\phi^j_i$ which does not vanish unless $\Phi=0$.} and hence \eqref{non par. cond.} establishes a crucial fact. Namely, for any proper Higgs bundle (i.e., one in which $\Phi$ is not identically zero), $h$ is not parallel with respect to the Hitchin-Simpson connection ${\cal D}_{h}$. As a consequence of this, it is not possible to obtain a straightforward adaptation to Higgs bundles of the classical arguments of Section \ref{Intro.} where, as we have shown before, the parallelism condition of $h$ plays a key role. In fact, the lack of a parallelism condition of the metric $h$ with respect to ${\cal D}_{h}$ shows a significant difference between the Higgs bundles setting and the classical case of holomorphic vector bundles. It is just because of the non-vanishing of \eqref{non par. cond.} that it is not possible to generalize to Higgs bundles the classical Theorem \ref{Prop. 6, classical}, which gives a classification of the critical points of the Kobayashi functional. This makes it explicit, once again, that the functional ${\cal J}$ defined in \eqref{Def. cal J} does not always satisfy the same properties of the classical counterpart $J$, introduced in \eqref{Def. J}. Indeed, similar contrasts between the Higgs bundles setting and the classical one have already been mentioned in \cite{Cardona 7}, where it was shown that the difference between ${\cal J}$ and the Yang-Mills-Higgs functional is given by a couple of terms, one of these a topological constant involving the first and second Chern classes of ${\mathfrak E}$, and the other one a term depending on the hermitian metric. A situation in clear contrast with the classical case, where the Kobayashi and the Yang-Mills functional differ only by the topological constant.

\section{Discussion and conclusions}

In general terms, the central object of this article was the notion of Higgs bundle. As previously mentioned, Higgs bundles arose as geometric objects associated with the two-dimensional reduction of the self-dual Yang–Mills equations in four dimensions. Consequently, all properties of Higgs bundles are potentially relevant not only in complex geometry, but also in Yang-Mills theory, and more in general in quantum field theory and string theory. In order to make this article accessible to a broader audience, including both geometers and mathematical physicists, we have included a Section \ref{Intro.} in the form of preliminaries, where we summarized the main classical material from complex geometry that will be important in the rest of the article. In this section, we chose to follow the notation of the classical text of Kobayashi \cite{Kobayashi}, as it provided an elegant and aesthetically pleasing framework that is also well suited to Higgs bundles. The aim of this section was to bring together several properties that appeared in different chapters of \cite{Kobayashi}, but which are often scattered throughout other classical texts on complex geometry, such as those of Demailly \cite{Demailly} and Griffiths and Harris \cite{Griffiths-Harris}. Additionally, we firmly believe that this indeed makes the article more self-contained. Also, in this section, we presented the key ideas without relying on local computations, and, as we saw, this strategy proved useful throughout the rest of the paper. It is worth noting that a similar treatment—avoiding local computations—was also employed in \cite{Ramanan}, although in the context of the of Spin bundles.

As we mentioned in the first two sections, the main purpose of this article has been to study certain geometric properties of Higgs bundles. In particular, we focused on results that can be seen as natural extensions of classical results in complex geometry to the context of Higgs bundles. In Section \ref{Higgs 2nd. fund.} we have shown that if $({\mathfrak E},h) = (E,\Phi,h)$ is a hermitian Higgs bundle and ${\mathfrak S}\subset{\mathfrak E}$ is a Higgs subbundle, then $S^{\perp}$ is ${\bar\Phi}_{h}$-invariant, but not necessarily $\Phi$-invariant. 
This fact played a crucial role and was the origin of the forms ${\cal A}_{h}$ and ${\cal B}_{h}$ that appeared in the Gauss-Codazzi equations for hermitian Higgs bundles. Here, it is important to note that, even though the form of the Gauss–Codazzi equations \eqref{R_h Higgs} was formally similar to the classical ones, the new forms ${\cal A}_{h}$ and ${\cal B}_{h}$ were not the same as $A_{h}$ and $B_{h}$. A fact that followed from the definitions \eqref{eq:A_cal} and \eqref{eq:B_cal}. In Section \ref{Herm. Higgs b.} we have shown that using additional invariance conditions we obtain some propositions in the Higgs bundle settings that can be considered as natural extensions of classical results on holomorphic hermitian vector bundles. In particular, at the end of this section we obtain a couple of corollaries that can be established only in terms of objects in the Higgs category.

Finally, as a straightforward consequence of Theorem \ref{Prop. 9, Higgs} of Section \ref{Sec. cal J}, if $h$ is a critical point of the Kobayashi functional ${\cal J}$, then the curvature ${\cal K}_{h}$ satisfies a parallelism condition with respect to the Hitchin-Simpson connection 
${\cal D}_{h}$. However, the metric $h$ is not parallel with respect to ${\cal D}_{h}$, a fact that is summarized in \eqref{non par. cond.}.  As a consequence of this, we have shown that, as far as the Kobayashi functional is concerned, a classical decomposition theorem cannot be naturally extended to Higgs bundles. We do not know yet if there exists a kind of decomposition result concerning the critical points of the functional \eqref{Def. cal J}. We hope to address this question in a subsequent article.\\

\noindent{\bf Acknowledgements}\\

\noindent The first author wants to thank H. Garc\'ia-Compe\'an for bringing Wijnholt's paper \cite{Wijnholt} for their attention. The second author wants to thank the support of Secretar\'ia de Ciencia, Humanidades,  Tecnolog\'ia e Innovaci\'on (Secihti) through the doctoral scholarship No. 830628.


\end{document}